\documentclass[11pt]{amsart}
\usepackage{geometry}                
\geometry{letterpaper}                   
\usepackage{graphicx}
\usepackage{amsmath}%
\usepackage{amsfonts}%
\usepackage{amssymb}
\usepackage{epstopdf}
\usepackage{color}
\usepackage{esint}
\usepackage{hyperref}
\usepackage{cite}
\DeclareGraphicsRule{.tif}{png}{.png}{`convert #1 `dirname #1`/`basename #1 .tif`.png}
\input xy
\xyoption{all}

%
\newtheorem{theorem}{Theorem}
\theoremstyle{plain}

\newtheorem{lemma}{Lemma}
\newtheorem{proposition}{Proposition}

\theoremstyle{definition}
\newtheorem{definition}{Definition}
\theoremstyle{remark}
\newtheorem{example}{Example}

\newtheorem{remark}{Remark}

\numberwithin{equation}{section}

%


\newcommand{\mbb}{\mathbb}
\newcommand{\mbf}{\mathbf}
\newcommand{\mc}{\mathcal}
\newcommand{\on}{\operatorname}
\newcommand{\Lied}{\mathcal{L}}


\title{Integration of Exact Courant Algebroids}

\author{David Li-Bland}
\author{Pavol \v{S}evera}

\address{Department of Mathematics, University of Toronto, 40 St George Street, Toronto, Ontario
M4S2E4, Canada}
\email{dbland@math.toronto.edu}
\address{Department of Mathematics, Universit\'{e} de Gen\`{e}ve, Geneva, Switzerland, on leave
from Dept. of Theoretical Physics, FMFI UK, Bratislava, Slovakia}
\email{pavol.severa@gmail.comphs}

\begin{document}
\begin{abstract}
In this paper, we describe an integration of exact Courant algebroids to symplectic 2-groupoids, 
and we show that the differentiation procedure from \cite{Severa:2007vu} inverts our integration.
\end{abstract}
\maketitle
\tableofcontents
\section{Introduction}
A Courant algebroid is a Lie 2-algebroid paired with a compatible symplectic structure \cite{Roytenberg:1998ku,Roytenberg99,Roytenberg02,Roytenberg:2002,LetToWein,Severa:2005vla}. Therefore, in the study of Courant algebroids 
two immediate questions are
\begin{description}
\item[Question 1] What is the global object ``integrating'' a Courant algebroid?
\item[Question 2] How do you ``differentiate'' the global object to recover the Courant algebroid?
\end{description}

Question~1 was first addressed in \cite{Severa:2005vla}, where, given a Courant algebroid, a construction of a 2-groupoid carrying a symplectic form on the 2-simplices is sketched (see also \cite{Sheng:2009un}). Unfortunately, this construction was infinite dimensional.  A procedure to differentiate Lie $n$-groupoids to Lie $n$-algebroids is described in \cite{Severa:2007vu}, giving an  answer to Question~2. However, this differentiation procedure does not invert the integration procedure described in \cite{Severa:2005vla} ``on the nose''.

Although the answer to the Question~1 might be somewhat complicated, it has a simple solution for some of the most popular Courant algebroids, namely exact Courant algebroids \cite{Severa:2001}. For instance, an elegant solution for exact Courant algebroids with trivial characteristic class is presented in \cite{Mehta:2010ue} in terms of the ``bar'' construction. In this paper, we present a solution for arbitrary exact Courant algebroids. Our construction results in a (local) Lie 2-groupoid carrying a symplectic form on the space of 2-simplices. Furthermore, we show that if one differentiates our construction (as in \cite{Severa:2007vu}), one recovers the original Courant algebroid.

The idea of our construction is as follows. As a differential graded manifold, the standard Courant algebroid over a manifold $M$ is $T[1]T^*[1]M$. Since it is of the form $T[1]X$ for some graded manifold $X$ ($X=T^*[1]M$), the problem of its integration to a local  2-groupoid $K_\bullet$ has a simple solution (see Remark~\ref{rem:simGlbInt}). Next we construct a symplectic form on $K_2$ making $K_\bullet$ into a (local) 2-symplectic 2-groupoid which differentiates to the standard Courant algebroid.

We then prove that an arbitrary exact Courant algebroid over $M$ is isomorphic, as a differential graded manifold, to the standard one. We can therefore use the same 2-groupoid $K_\bullet$ as its integration. The isomorphism is not, however,  a symplectomorphism. We compute the modification to the symplectic form on $T[1]T^*[1]M$ and modify the symplectic form on $K_2$ correspondingly.

In \cite{Sheng:2011vz}, a more conceptual explanation for our construction is given, related to the work of Mehta, Gracia-Saz, Arias Abad, Crainic and Schaetz on actions up to homotopy \cite{gracia2010lie,GraciaSaz:2010vm,abad2009representations,arias2010a_infty} (see also \cite{Sheng:2009un}).

\subsubsection{Acknowledgements}
D.L.-B was supported by an NSERC CGS-D grant, and would like to thank Eckhard Meinrenken for his advice and suggestions, and Chenchang Zhu, and Rajan Mehta for helpful discussions. P. \v{S}. was supported by the Swiss National Science Foundation (grant 200020-120042 and 200020-126817).

We also thank the referee for their valuable recommendations which have made this paper considerably easier to read.

\section{Background}
\begin{remark}[A note on Lie groupoids and Lie algebroids]
We use the definitions of (local) Lie $n$-groupoids given by Andre Henriques and Chenchang Zhu \cite{Henriques:2008cda,zhu2009kan}, in terms of simplicial manifolds.
We take the definition of Lie $n$-algebroids given in \cite{Severa:2007vu} in terms of $NQ$-manifolds.
\end{remark}

In this section, we recall the differentiation procedure described in \cite{Severa:2007vu} which takes Lie $n$-groupoids to Lie $n$-algebroids. 


\subsection{Simplicial Manifolds}
For $n\in\mbb{N}$, let $[n]$ be the category generated by the directed graph $$0\to1\to\cdots\to n,$$ and ${\mbf{\Delta}}$ the full subcategory of $\on{Cat}$ (the category of small categories) generated by the objects $[0],[1],[2],\dots$. We have the distinguished functors \cite{goerss2009simplicial} $$s^j(0\to1\to\cdots\to n+1)=(0\to 1\to \cdots\to j\xrightarrow{\on{id}}j\to\cdots\to n),$$ (that is, we insert the identity in the $j^{th}$ place), and $$d^i(0\to1\to\cdots\to n-1)=(0\to1\to\cdots\to i-1\to i+1\to\cdots\to n),$$ (that is, we compose $i-1\to i\to i+1$). We denote the corresponding maps in ${\mbf{\Delta}}^{op}$ by $s_j$ and $d_i$.

Let $\on{Man}$ be the category of smooth manifolds. Then $\on{Man}^{{\mbf{\Delta}}^{\on{op}}}$ is the category of simplicial manifolds.  More generally, if $C$ is any category, then $C^{{\mbf{\Delta}}^{\on{op}}}$ is the category of simplicial objects (in $C$). 

	As a quick word on notation, if $X\in C^{{\mbf{\Delta}}^{\on{op}}}$, then it is conventional to write $X_n:=X([n])$.

\begin{example}\label{ex:StdSimplices}
The {\em standard $n$-simplex}, $\Delta^n$, is the contravariant functor $$\Delta^n:=\on{Cat}(\cdot,[n])\in\on{Set}^{{\mbf{\Delta}}^{\on{op}}}\subset \on{Man}^{{\mbf{\Delta}}^{\on{op}}}$$ (where sets are viewed as discrete manifolds).
\end{example}

\begin{example}\label{ex:Horn}
The $k^{th}$ horn $\Lambda_k^n\subset \Delta^n$ is defined as
$$(\Lambda_k^n)([j])=\{f\in \Delta^n([j])\mid d^k([n])\nsubseteq f([j])\}.$$ We may think of $\Lambda^n_k$ as the boundary of the standard $n$-simplex $\Delta^n$ with the $k^{th}$ face removed. 
\end{example}

Let $M_\bullet$ be a simplicial manifold. The natural inclusion $\Lambda_k^n\subset \Delta^n$ induces a map \begin{equation}\label{eq:hornFilling}\on{Man}^{{\mbf{\Delta}}^{\on{op}}}(\Delta^n,M_\bullet)\to\on{Man}^{{\mbf{\Delta}}^{\on{op}}}(\Lambda^n_k,M_\bullet).\end{equation}
The {\em Kan conditions} \cite{Henriques:2008cda} are 
$$Kan(n,k):\text{ \eqref{eq:hornFilling} is a surjective submersion},\quad Kan!(n,k):\text{ \eqref{eq:hornFilling} is a diffeomorphism}.$$

The following definition is due to Henriques \cite{Henriques:2008cda}.
\begin{definition}[\cite{Henriques:2008cda}]\label{def:liengrpoid} A {\em Lie $n$-groupoid} is a simplicial manifold $M_\bullet$ satisfying 
\begin{itemize}
\item $Kan!(m,k)$ for $m>n$ and $0\leq k\leq m$, and
\item $Kan(m,k)$ for $m> 0$ and $0\leq k\leq m$.
\end{itemize}
\end{definition}

In particular, a Lie $1$-groupoid is the nerve of a Lie groupoid. 
For instance, the nerve of the pair groupoid (defined explicitly in the following example) is a Lie $1$-groupoid.

\begin{example}\label{ex:EM}
There is a full and faithful functor $E:\on{Man}\to\on{Man}^{{\mbf{\Delta}}^{\on{op}}}$ given by $E_nM=M^{n+1}$ and
\begin{equation*}\label{eq:EM}EM(f):(z_0,\dots,z_m)\to(z_{f(0)},\dots,z_{f(n)}),\end{equation*}
 for any monotone map $f:[n]\to[m]$.
 \end{example}

We will be interested in the local version of $n$-groupoids, as introduced by Zhu \cite{zhu2009kan}. However, as suggested by a referee, we will reformulate it in terms of the microfolds introduced by Cattaneo,  Dherin, and  Weinstein \cite{Cattaneo:2010uq} and  Blohmann, Fernandes, and  Weinstein \cite{Blohmann:2010ub} following Milnor \cite{Milnor:1964tm}.

\begin{definition}\label{def:microfold}
A \emph{microfold} is an equivalence class of pairs $(M,S)$ of manifolds such that $S\subseteq M$ is a closed submanifold. Two such pairs $(M_1,S_1)$ and $(M_2,S_2)$ are said to be equivalent if $S_1=S_2=S$ and there exists a third pair $(U,S)$ such that $U$ is simultaneously an open subset of both $M_1$ and $M_2$. We denote the equivalence class by $[M,S]$, and refer to $S$ as the \emph{microfold core} of $[M,S]$.

A morphism  between microfolds is a germ of maps between representatives which takes the source microfold core to the target microfold core. Such a morphism is said to be a \emph{surjective submersion} (resp. a \emph{diffeomorphism}) if it is a surjective submersion (resp. a diffeomorphism) for a suitable choice of representatives.
\end{definition}

We denote the category of microfolds by $\on{Mfold}$. There is a forgetful functor $F_{core}:\on{Mfold}\to\on{Man}$ which takes a microfold $[M,S]$ to its microfold core $S$. 

\begin{definition}\label{def:locSimpMan}The category $\on{Man}_{loc}^{{\mbf{\Delta}}^{\on{op}}}$ of \emph{local simplicial manifolds} (or \emph{simplicial manifold germs}) is the subcategory of functors $[M_\bullet,S_\bullet]:\mbf{\Delta}^{\on{op}}\to\on{Mfold}$ such that 
\begin{itemize}
\item $M_0=S_0$, and
\item the composition $F_{core}\circ [M_\bullet,S_\bullet]:=S_\bullet$ with the forgetful functor is a constant functor.
\end{itemize}

We will denote a local simplicial manifold $[M_\bullet,S_\bullet]:=[M_\bullet,M_0]$ simply by $[M_\bullet]$.
\end{definition}

Working with microfolds in place of manifolds, one obtains direct analogues of Henriques'  Kan conditions \cite{Henriques:2008cda} for local simplicial manifolds. Thus one obtains the following definition of a local Lie $n$-groupoid, essentially a reformulation of the one by Zhu \cite{zhu2009kan}.


\begin{definition}\label{def:locliengrpoid} A {\em local Lie $n$-groupoid} is a local simplicial manifold $[M_\bullet]$ satisfying 
\begin{itemize}
\item $Kan^l!(m,k)$ for $m>n$ and $0\leq k\leq m$, and
\item $Kan^l(m,k)$ for $m> 0$ and $0\leq k\leq m$.
\end{itemize}
\end{definition}

There is a forgetful functor $[\cdot]:\on{Man}^{{\mbf{\Delta}}^{\on{op}}}\to \on{Man}_{loc}^{{\mbf{\Delta}}^{\on{op}}}$ which takes a simplicial manifold $M_\bullet$ to its germ $[M_\bullet]:=[M_\bullet,M_0]$. It restricts to a functor from Lie $n$-groupoids to local Lie $n$-groupoids.

\subsection{$NQ$-manifolds}
An $NQ$-manifold is a differential non-negatively graded manifold. We recall a reformulation of this definition from \cite{Severa:2007vu}.

We let $\on{SMan}$ denote the category of super-manifolds.
Let $\theta$ be the standard coordinate on the odd line $\mbb{R}^{0\mid 1}$. A general map $$\mbb{R}^{0\mid 1}\to\mbb{R}^{0\mid 1},\quad \theta\to \xi+s\theta$$  can be identified with an element $(s,\xi)\in\mbb{R}^{1\mid 1}$.
The corresponding super-semi-group $\on{\underline{End}}(\mbb{R}^{0\mid 1}):=\on{\underline{SMan}}(\mbb{R}^{0\mid 1},\mbb{R}^{0\mid 1})\cong \mbb{R}^{1\mid 1}$ carries the multiplication $$(s,\xi)\cdot (t,\eta)=(st,\xi+s\eta),\quad s,t\in\mbb{R},\;\xi,\eta\in\mbb{R}^{0\mid 1}.$$ 

\begin{definition}[\cite{Severa:2005vla,kontsevich03,Kochan03}]\label{def:NQmfld}
An $NQ$-manifold \cite{Severa:2005vla,kontsevich03,Kochan03} is a super-manifold $X\in\on{SMan}$ with an action of $\on{\underline{End}}(\mbb{R}^{0\mid 1})$ such that $(-1,0)$ acts as the parity operator (it just changes the sign of the odd coordinates). In this context, a function $f\in C^\infty(X)$ has degree $\lvert f\rvert:=k$ if $$(s,0)\cdot f=s^k f.$$ Note that the degree of a function is always a non-negative integer. Furthermore, the homological vector field $Q$ on $X$ is defined by $$Q:f\to \frac{\partial}{\partial \xi}\big((0,\xi)\cdot f\big).$$

The manifold $(0,0)\cdot X\subset X$ is called the base of  $X$.

An $N$-manifold is a super-manifold $X\in\on{SMan}$ with an action of the multiplicative semi-group $\mbb{R}$ such that $(-1,0)$ acts as the parity operator.

Finally, a bi-$NQ$-manifold is a super-manifold $X\in\on{SMan}$ with an action of $\big(\on{\underline{End}}(\mbb{R}^{0\mid 1})\big)^2$ such that $\big((-1,0),(-1,0)\big)$ acts as the parity operator. A function $f\in C^\infty(X)$ has degree $\lvert f\rvert:=(k,l)$ if $\big((s,0),(t,0)\big)\cdot f=s^kt^l f$. Additionally, $X$ is said to be {\em concentrated in the  second grading} if $\big((0,0),(1,0)\big)$ acts by the identity on $X$.
\end{definition}

We let $\on{SMan}_{NQ}$ and $\on{SMan}_N$ denote the categories of $NQ$-manifolds and $N$-manifolds, respectively. Definitions~\ref{def:microfold} and \ref{def:locSimpMan} also extend in the obvious way to define the categories $\on{SMfold}_{NQ}$, $\on{SMfold}_N$, $\big(\on{SMan}_{NQ}\big)_{loc}^{\mbf{\Delta}^{\on{op}}}$ and $\big(\on{SMan}_N\big)_{loc}^{\mbf{\Delta}^{\on{op}}}$ of $NQ$-microfolds, $N$-microfolds, and local simplicial $NQ$ and $N$-manifolds, respectively.

\begin{example}\label{ex:T[1]M}
If $M$ is any manifold, then the pre-sheaf $\on{SMan}(\cdot\times\mbb{R}^{0\mid1},M)$ is represented by the super-manifold $T[1]M$. The natural action of $\on{\underline{End}}(\mbb{R}^{0\mid 1})$ on $\on{SMan}(\cdot\times\mbb{R}^{0\mid1},M)$ is such that $(-1,0)$ acts as the parity operator. Therefore $T[1]M$ is an $NQ$-manifold.

In fact $C^\infty(T[1]M)=\Omega^\bullet(M)$ and the homological vector field on $T[1]M$ is just the de Rham differential $Q_{deRham}=d$.
\end{example}

\subsection{The Functor $\on{1-Jet}:\on{Man}_{loc}^{{\mbf{\Delta}}^{\on{op}}}\to\widehat{\on{SMan}}$} 
In this section, we recall the functor $\on{1-Jet}$, which maps the category of local simplicial manifolds, $\on{Man}_{loc}^{{\mbf{\Delta}}^{\on{op}}}$, to the category of presheaves of supermanifolds, $\widehat{\on{SMan}}:=\on{Sets}^{({\on{SMan}}^{\on{op}})}$ (where $\on{Sets}$ is the category of sets).

Suppose that $[M_\bullet]\in \on{Man}_{loc}^{{\mbf{\Delta}}^{\on{op}}}$, then (following \cite{Severa:2007vu}) we define 
\begin{equation}\label{eq:1jetFrm}\on{1-Jet}([M_\bullet])(X)=\on{SMfold}^{{\mbf{\Delta}}^{\on{op}}}([X\times E_\bullet\mbb{R}^{0\mid 1}],[M_\bullet]), \text{ for any } X\in\on{SMan}.\end{equation}
 Since  $E_\bullet\mbb{R}^{0\mid 1}$ carries a natural action of $\on{\underline{End}}(\mbb{R}^{0\mid 1})$, so does $\on{1-Jet}([M_\bullet])\in\widehat{\on{SMan}}$. 
Furthermore, we have
\begin{proposition}[\cite{Severa:2007vu}] If $[M_\bullet]$ is a local Lie $n$-groupoid, then $\on{1-Jet}([M_\bullet])$ is representable, therefore $\on{1-Jet}([M_\bullet])\in\on{SMan}_{NQ}$.\end{proposition}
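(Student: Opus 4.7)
The plan is to construct the representing super-manifold $Y$ explicitly by an induction along the skeleton, using the Kan conditions as the key combinatorial input. By definition,
$$\on{1-Jet}([M_\bullet])(X)=\on{SMfold}^{{\mbf{\Delta}}^{\on{op}}}([X\times E_\bullet\mbb{R}^{0\mid 1}],[M_\bullet]),$$
so a point of $\on{1-Jet}([M_\bullet])(X)$ consists of a coherent family of germs $\phi_k:X\times(\mbb{R}^{0\mid1})^{k+1}\to M_k$ near $X\times\{0\}\to M_0\subset M_k$ satisfying $\phi_k\circ d_i = d_i\circ\phi_{k+1}$ and $\phi_k\circ s_j = s_j\circ\phi_{k-1}$. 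The idea is to show that this data is freely and smoothly parametrised by finitely many odd directions over $M_0$, giving a representing super-manifold.

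First I would treat the high-dimensional data. For $m>n$ the condition $Kan^l!(m,k)$ asserts that every horn $[X\times\Lambda^m_k\to M_\bullet]$ has a unique filler. Since $E_\bullet\mbb{R}^{0\mid1}$ is itself built inductively from horns (its every simplex is a unique filler of any of its horns, because $E_\bullet$ is $2$-coskeletal in the relevant sense), one proves by induction on $m>n$ that $\phi_m$ is uniquely and smoothly determined by $\phi_0,\dots,\phi_n$ via successive horn-filling in $[M_\bullet]$. Thus the assignment $\phi\mapsto(\phi_0,\dots,\phi_n)$ identifies $\on{1-Jet}([M_\bullet])(X)$ with the set of coherent $n$-truncations. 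Second, I would analyse the truncation. At each level $k\le n$, the simplicial identities with degeneracies allow one to decompose $\phi_k$ as a degenerate part, produced from $\phi_0,\dots,\phi_{k-1}$ by the $s_j$, plus a ``new'' odd normal piece coming from the non-degenerate monomial $\theta_0\theta_1\cdots\theta_k$. Compatibility with the face maps $d_i$ at level $k$ further constrains this new piece to land in an appropriate iterated fibre product over $M_0$. The Kan$^l(k,j)$ conditions for $k\le n+1$ ensure that the relevant restriction maps are surjective submersions, so the resulting fibre products are smooth and the construction proceeds level by level within $\on{SMan}$.

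Assembling these pieces produces an explicit representing super-manifold $Y\in\on{SMan}$, finite-dimensional in each parity degree, together with a universal simplicial map $Y\times E_\bullet\mbb{R}^{0\mid1}\to M_\bullet$. The functorial $\on{\underline{End}}(\mbb{R}^{0\mid1})$-action on $E_\bullet\mbb{R}^{0\mid1}$ transports to $Y$, the involution $(-1,0)$ acts as parity because it acts so on each $(\mbb{R}^{0\mid1})^{k+1}$, and thus $Y$ is an $NQ$-manifold in the sense of Definition~\ref{def:NQmfld}. The main obstacle is the first step: one must argue that successive horn-filling in the \emph{germ} category behaves as well as in the manifold category, i.e.\ that the unique horn-fillers provided by $Kan^l!(m,k)$ depend smoothly on their boundary data and patch into a genuine simplicial morphism out of the infinite-dimensional $E_\bullet\mbb{R}^{0\mid1}$. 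The microfold/germ formalism introduced above is precisely what makes this work, since everything happens in a formal neighbourhood of the common core $M_0$, where horn-filling degenerates onto a unique smooth family.
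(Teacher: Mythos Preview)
The paper does not give its own proof of this proposition: it is quoted from \cite{Severa:2007vu} without argument. So there is nothing in the present paper to compare your sketch against. That said, your outline is broadly the strategy used in the cited reference: truncate using the strict Kan conditions $Kan^l!(m,k)$ for $m>n$, then parametrise the remaining finite amount of data level by level, invoking the weak Kan conditions to guarantee that the relevant fibre products are smooth.

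Two imprecisions are worth noting. First, your parenthetical justification that ``$E_\bullet$ is $2$-coskeletal'' is wrong (in fact $E_\bullet X$ is $0$-coskeletal for any $X$), but this is harmless: what the first step actually uses is unique horn-filling in the \emph{target} $[M_\bullet]$, not any coskeletality of the source. Second, the assertion that at level $k$ the only ``new'' odd direction is the single top monomial $\theta_0\theta_1\cdots\theta_k$ is too quick. Already for $k=1$ the degeneracy relation $s_0^*\phi_1=s_0\circ\phi_0$ leaves both a $(\theta_1-\theta_0)$-coefficient and a $\theta_0\theta_1$-coefficient undetermined (compare the computation of \eqref{eq:Fv}--\eqref{eq:obsToExt} later in the paper); the face relations then cut this down further. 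Identifying exactly which odd directions survive at each level, and showing that the resulting iterated fibre product is represented by a finite-dimensional super-manifold, is where the genuine work in \cite{Severa:2007vu} lies, and your sketch does not yet supply that.
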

\begin{remark}
Note that $\on{1-Jet}$ is not a faithful functor. In general, for $n>1$, there are \emph{more} morphisms between local Lie $n$-groupoids than between their corresponding $NQ$-manifolds (see Remark~\ref{rem:nonUniqueSymplForm} for a relevant example). Furthermore, the composition $$M_\bullet\to[M_\bullet]\to\on{1-Jet}([M_\bullet])$$ from Lie $n$-groupoids to $NQ$-manifolds already fails to be full for $n=1$ (see the work Crainic and Fernandes \cite{Lie-Algebroids}).
\end{remark}

\begin{definition}\label{def:integrates}
We say that a local Lie $n$-groupoid $[M_\bullet]$ {\em integrates} an $NQ$-manifold $X$ if $\on{1-Jet}([M_\bullet])\cong X$. Similarly, we say that a Lie $n$-groupoid $M_\bullet$ {\em integrates} an $NQ$-manifold $X$ if $\on{1-Jet}([M_\bullet])\cong X$.
\end{definition}

\begin{example}\label{ex:EMintTM}
Since the functor $E:\on{Man}\to\on{Man}^{{\mbf{\Delta}}^{\on{op}}}$ from Example~\ref{ex:EM} is full and faithful, $E_\bullet M$ integrates $T[1]M$.
\end{example}

\begin{example}
If $G$ is a Lie group and $\mathfrak{g}$ is its Lie algebra, then the nerve of $G$ integrates the $NQ$-manifold $\mathfrak{g}[1]$. More generally, if $\Gamma$ is a Lie groupoid and $A$ the corresponding Lie algebroid, then (the nerve of) $\Gamma$ integrates $A[1]$.
\end{example}

\begin{example}\label{ex:DoldKan} Let $\on{Vect}_f(\mbb{R})$ denote the category of finite dimensional vector spaces over $\mbb{R}$, $\on{Ch}^+(\on{Vect}_f(\mbb{R}))$ the positively graded chain complexes and $V\in\on{Vect}_f(\mbb{R})$.
The Dold-Kan correspondence
$$N:\big(\on{Vect}_f(\mbb{R})\big)^{{\mbf{\Delta}}^{\on{op}}}\rightleftarrows \on{Ch}^+(\on{Vect}_f(\mbb{R})):\Gamma$$
 defines the Eilenberg-Mac Lane object $K(V,n):=\Gamma(V[-n])$, which we view as a simplicial manifold. 
 
 Concretely, \begin{equation}\label{eq:EMLOb}\begin{array}{rl}
 K(V,n)_k:=\big\{\{v_f\}_{f:[n]\to[k]}\mid&v_f\in V,\; v_f=0 \text{ if } f \text{ is not injective,}\\ 
 & \text{and for every } g:[n+1]\to [k], \;\sum_{i=0}^{n+1}(-1)^i v_{g\circ d^i}=0\big\}
 \end{array}\end{equation}
 and for any monotone map $h:[l]\to[k]$, 
 $$K(V,n)(h): K(V,n)_k\to K(V,n)_l\quad \bigg(\{v_f\}_{f:[n]\to[k]}\to \{v'_{f'}:=v_{h\circ f'}\}_{f':[n]\to[l]}\bigg).$$
 
 The following picture can be useful. An element of $K(V,n)_k$ is a labelling of the $n$ dimensional faces of the standard $k$ simplex by elements of $V$, so that the alternating sum around any $n+1$ dimensional face is zero. Note that we have a diffeomorphism $V\cong K(V,n)_n$ given by \begin{equation}\label{eq:KVnn}v\to \left\{ v_f=\left\{\begin{array}{rl} v&\text{ if }f=\on{id}:[n]\to[n]\\0&\text{ otherwise.}\end{array}\right\}\right\}_{f:[n]\to[n]}\end{equation}

$K(V,n)$ integrates the $NQ$-manifold $V[n]$ (with trivial $Q$-structure). For any $v\in V[n]$, the corresponding map $E_k\mbb{R}^{0\mid1}\to K(V,n)_k$ is given by \begin{equation}\label{eq:VEKVmap}(\theta_0,\dots,\theta_k)\to \big\{v_f:=v\big({\textstyle \sum_{i=0}^n(-1)^i\theta_{f\circ d^i(0)}\cdots\theta_{f\circ d^i(n-1)}}\big)\big\}_{f:[n]\to[k]}.\end{equation}
\end{example}

\subsubsection{Multiplicative Forms}

Following \cite{Crainic:2003bt,Crainic02,Bursztyn03-1,SCattaneo:2004fe, Ponte:2005txa,Bursztyn09,Bursztyn:2010wb}
we make the following definition of multiplicative forms on a simplicial manifold.

\begin{definition}\label{def:MultForm1}
 Let $M_\bullet$ be a (local) simplicial manifold. We say that a $k$-form $\alpha\in\Omega^k(M_n)$ is {\em multiplicative} if 
for any $0\leq i<n$, $s_i^*\alpha=0$, and 
  $D\alpha=0$, where \begin{equation}\label{eq:simpDiff}D:=\sum_{i=0}^n (-1)^i d_i^*:\Omega^k(M_n)\to \Omega^k(M_{n+1})\end{equation} is the simplicial differential.
\end{definition}

In the spirit of \cite{Mackenzie-Xu94,Xu95,Mackenzie97,Bursztyn09,Bursztyn:2010wb} we would like to interpret Definition~\ref{def:MultForm1} in terms of morphisms of simplicial manifolds. But first we need to point out that one can extend the functor $\on{1-Jet}$ to (local) simplicial $NQ$-manifolds,
$$\on{1-Jet}:\big(\on{SMan}_{NQ}\big)_{loc}^{{\mbf{\Delta}}^{\on{op}}}\to\widehat{\on{SMan}},$$ by the same formula \eqref{eq:1jetFrm}. 

If $X_\bullet$ is a (local) simplicial $NQ$-manifold, then $\on{1-Jet}([X_\bullet])$ carries an action of $\big(\on{\underline{End}}(\mbb{R}^{0\mid 1})\big)^2$. Specifically, in the formula
$$\on{1-Jet}([X_\bullet])(Z)=\on{SMan}^{{\mbf{\Delta}}^{\on{op}}}([Z\times E_\bullet\mbb{R}^{0\mid 1}],[X_\bullet]), \text{ (for any } Z\in\on{SMan}),$$
 $\on{\underline{End}}(\mbb{R}^{0\mid 1})\times (1,0)$ acts directly on the factor $X_\bullet$ and $(1,0)\times \on{\underline{End}}(\mbb{R}^{0\mid 1})$ acts directly on the factor $\mbb{R}^{0\mid 1}$.

Just as certain (local) simplicial manifolds integrate $NQ$-manifolds, certain (local) simplicial $NQ$-manifolds integrate bi-$NQ$-manifolds. 

 We will be interested in the following example.

\begin{example}
Suppose that $M_\bullet$ is a (local) simplicial manifold integrating the $NQ$-manifold $X$. View $X$ as a bi-$NQ$-manifold concentrated in the second grading. Then $T[1]M_\bullet$ integrates $T[1,0]X$. (This follows immediately from the definition of $\on{1-Jet}$ and the fact that $T[1,0]N$ represents the pre-sheaf $\on{SMan}(\cdot\times \mbb{R}[-1,0],N)$ for any $N\in\on{SMan}$).
\end{example}

Recall that a $k$-form $\alpha\in\Omega^k(M)$ can be thought of as a (grading-preserving) function $\alpha:T[1]M\to \mbb{R}[k]$. 

\begin{lemma}
Let $M_\bullet$ be a (local) simplicial manifold, and $\alpha$ a $k$-form $\alpha\in\Omega^k(M_n)$.
The corresponding map \begin{equation}\label{eq:prealph}\alpha:T[1]M_n\to\mbb{R}[k]\cong K(\mbb{R}[k],n)_n\end{equation}
extends to a simplicial map to the Eilenberg-Mac Lane object \eqref{eq:EMLOb},
\begin{equation}\label{eq:hatalpha1}\hat\alpha:T[1]M_\bullet\to K(\mbb{R}[k],n)_\bullet,\end{equation}
if and only if $\alpha$ is multiplicative. In this case, the extension \eqref{eq:prealph} is unique.
\end{lemma}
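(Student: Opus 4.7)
My plan is to exploit the explicit description \eqref{eq:EMLOb} of the Eilenberg-Mac Lane object: an element of $K(\mbb{R}[k],n)_l$ is a labelling $\{v_g\}_{g:[n]\to[l]}$ by elements of $\mbb{R}[k]$ satisfying (i) $v_g=0$ whenever $g$ is not injective, and (ii) an alternating-sum identity around every $(n+1)$-face. A simplicial map $\hat\alpha$ consists of grading-preserving maps $\hat\alpha_l:T[1]M_l\to K(\mbb{R}[k],n)_l$ intertwining all faces and degeneracies; under the identification $\mbb{R}[k]\cong K(\mbb{R}[k],n)_n$ of \eqref{eq:KVnn}, asking that $\hat\alpha_n$ agree with \eqref{eq:prealph} just says that the identity-indexed component of $\hat\alpha_n$ equals $\alpha$.

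First I would pin down the form of $\hat\alpha$, yielding uniqueness. For any monotone $f:[n]\to[l]$, the simplicial operator $K(\mbb{R}[k],n)(f):K(\mbb{R}[k],n)_l\to K(\mbb{R}[k],n)_n$ sends $\{v_g\}_{g:[n]\to[l]}$ to $\{v_{f\circ h'}\}_{h':[n]\to[n]}$, and its identity-indexed component extracts $v_f$. Requiring $\hat\alpha$ to intertwine this operator with the lift $T[1]M(f):T[1]M_l\to T[1]M_n$ and projecting to the identity component forces
\begin{equation*}
(\hat\alpha_l(x))_f=(f^*\alpha)(x)\qquad\text{for all }x\in T[1]M_l,\ f:[n]\to[l],
\end{equation*}
where $f^*\alpha$ denotes the pullback of $\alpha$ along the simplicial operator $M(f):M_l\to M_n$. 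This formula proves uniqueness and reduces the lemma to showing that $l\mapsto\hat\alpha_l$ so defined is a well-defined simplicial map into $K(\mbb{R}[k],n)_\bullet$ precisely when $\alpha$ is multiplicative.

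For $\hat\alpha_l(x)$ to lie in $K(\mbb{R}[k],n)_l$ for every $l$ and $x$, the two constraints of \eqref{eq:EMLOb} must hold. Constraint (i), $f^*\alpha=0$ for every non-injective monotone $f$, is equivalent to $s_i^*\alpha=0$ for all $i$: every non-injective monotone map factors through some $s^j$, while the $s^j$ are themselves non-injective. Constraint (ii), $\sum_{i=0}^{n+1}(-1)^i(g\circ d^i)^*\alpha=0$ for every monotone $g:[n+1]\to[l]$, specializes at $l=n+1$, $g=\on{id}$ to exactly $D\alpha=0$, and the general case is the pullback $g^*(D\alpha)=0$. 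Hence the two constraints translate into the two clauses of Definition~\ref{def:MultForm1}. Once these hold, simpliciality of $\hat\alpha$ under an arbitrary monotone $h:[l']\to[l]$ is immediate from the functorial identity $(h\circ f')^*\alpha=f'^*\circ h^*\alpha$ together with the explicit description of $K(\mbb{R}[k],n)(h)$ from Example~\ref{ex:DoldKan}. The only mildly delicate point is the bookkeeping that reduces the full family of cocycle identities to the single equation $D\alpha=0$ via pullback; this is a routine consequence of the fact that each structure map of $T[1]M_\bullet$ is the tangent lift of the corresponding structure map of $M_\bullet$ and hence commutes with pullback of forms.
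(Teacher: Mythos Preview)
Your proof is correct and follows essentially the same route as the paper: both derive the forced formula $(\hat\alpha_l(x))_f=\alpha\bigl(T[1]M(f)(x)\bigr)$ from simpliciality (giving uniqueness), then identify the two constraints of \eqref{eq:EMLOb} with $s_i^*\alpha=0$ and $D\alpha=0$ via the factorization of non-injective maps through degeneracies and pullback of the cocycle identity, respectively. One small notational slip: your functoriality identity should read $(h\circ f')^*\alpha = h^*\bigl((f')^*\alpha\bigr)$ rather than $f'^*\circ h^*\alpha$, but the argument you intend is the right one.
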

\begin{proof}
First, we will compute the unique extension $\hat\alpha$ (assuming it exists).

 Using \eqref{eq:KVnn}, we see that for $x\in T[1]M_n$, \eqref{eq:prealph} is given by $$\hat\alpha(x)=\left\{v'_{f'}=\left\{\begin{array}{rl} \alpha(x) & \text{ if }f':=\on{id}\\ 0&\text{ otherwise.}\end{array}\right\}\right\}_{f':[n]\to[n]}$$ 
 
Let $y\in T[1]M_l$, and suppose $\hat\alpha(y)=\{v_f\}_{f:[n]\to [l]}$. Then for an arbitrary map $g:[n]\to [l]$, 
\begin{multline}
\{v'_{f'}:=v_{g\circ f'}\}_{f':[n]\to[n]}=K(\mbb{R}[k],n)(g)\circ\hat\alpha (y)
\\=\hat\alpha\circ T[1]M_\bullet(g)(y)=\left\{v'_{f'}:=\left\{\begin{array}{rl} \alpha\circ T[1]M_\bullet(g)(y) & \text{ if }f'=\on{id}\\ 0&\text{ otherwise.}\end{array}\right\}\right\}_{f':[n]\to[n]}
\end{multline}

Therefore $v_g=\alpha\circ T[1]M_\bullet(g)(y)$. It follows that $\hat\alpha$ must be defined by
\begin{equation}\label{eq:hatalpha2}\hat\alpha(y)=\{v_f:=\alpha\circ T[1]M_\bullet(f)(y)\}_{f:[n]\to[l]},\end{equation} for any $y\in T[1]M_l$. We let $v_f(y):=\alpha\circ T[1]M_\bullet(f)(y)$. 

Note that if $f:[n]\to [l]$ is not injective, we can factor $f= h\circ s^i$ for some $0\leq i<n$ and $h:[n-1]\to[l]$. Hence $v_f(y)=0$ if $s_i^*\alpha=0$. It follows that $s_i^*\alpha=0$ for every $0\leq i<n$, if and only if $v_f(y)=0$ for every $y\in T[1]M_l$ and $f:[n]\to[l]$ non injective.

Using \eqref{eq:hatalpha2}, we see that for any $g:[n+1]\to[l]$, and any $y\in T[1]M_l$,
$$\sum_{i=0}^{n+1}(-1)^iv_{g\circ d^i}(y)=\big(\sum_{i=0}^{n+1}(-1)^i\alpha\circ T[1]M_\bullet(d^i)\big)\circ T[1]M_\bullet(g)(y).$$ 
So $\sum_{i=0}^{n+1}(-1)^iv_{g\circ d^i}(y)=0$ for any $g:[n+1]\to[l]$, and any $y\in T[1]M_l$ if and only if $$\sum_{i=0}^{n+1}(-1)^i\alpha\circ T[1]M_\bullet(d^i)=0\Leftrightarrow D\alpha=0.$$


Comparison with Example~\ref{ex:DoldKan} shows that \eqref{eq:hatalpha2} defines a simplicial map if and only if $s_i^*\alpha=0$ and $D\alpha=0$, that is $\alpha$ is multiplicative.
\end{proof}

As a consequence, if $M_\bullet$ integrates an $NQ$-manifold $X$, and $$\alpha\in\Omega^k(M_n),\quad
\hat\alpha:T[1]M_\bullet\to K(\mbb{R}[k],n)$$ is a multiplicative $k$-form, then $$\on{1-Jet}(\hat\alpha):T[1,0]X\to\mbb{R}[k,n]$$ is a morphism of bi-graded $Q$-manifolds. That is, $\on{1-Jet}(\hat\alpha)$ defines a $Q$-invariant $k$-form of degree $n$ on $X$. Moreover, $\on{1-Jet}(\hat\alpha)$ is closed whenever $\alpha$ is.

\subsection{Courant algebroids}\label{sec:CAlg}
Recall \cite{Severa:2005vla,Roytenberg:2002}  that a Courant algebroid is a $NQ$-manifold $X$ carrying a $Q$-invariant symplectic form $\omega_X\in\Omega^2(X)$ of degree 2. If $M$ is the base of $X$, then $X$ is said to be a Courant algebroid over $M$.

\subsubsection{Standard Courant Algebroid}\label{sec:stdCAlg}
Let $M$ be a manifold. $T^*[2]T[1]M$ carries the canonical symplectic form $\omega$ of degree 2. The homological vector field on $T[1]M$, $Q_{deRham}$, lifts in the canonical way to define a $Q$-structure on $T^*[2]T[1]M$. Therefore $(T^*[2]T[1]M,Q,\omega)$ is a Courant algebroid, called the {\em standard Courant algebroid}.

For later reference, we describe $(T^*[2]T[1]M,Q,\omega)$ explicitly in coordinates.
 Let $x^a$ ($a=1,\dots,d=\on{dim}(M)$) be local coordinates on $M$. We take $\xi^a:=dx^a$ to be the coordinates on $T[1]M$ canonically associated to $x^a$. On the cotangent bundle $T^*[2]T[1]M$, we take $p_a,\eta_a$ to be the canonical coordinates associated to $x^a,\xi^a$. 
 
 \begin{remark}[Some Shorthand Notation]
 As a shorthand, we will denote a point with coordinates $$\big((x^1,\dots,x^d);(\xi^1,\dots,\xi^d);(p_1,\dots,p_d);(\eta_1,\dots,\eta_d)\big)\in \mbb{R}^d\times\mbb{R}[1]^d\times\mbb{R}[2]^d\times\mbb{R}[1]^d$$ by $(x^\alpha,\xi^\alpha,p_\alpha,\eta_\alpha)$.
 
 Furthermore, if $\{\tau_{a_1\dots a_k}\}_{\{1\leq a_1,\dots ,a_k\leq d\}}\subset C^\infty(M)$, we will often use $$\tau_{[a_1\dots a_k]}:=\frac{1}{k!}\sum_{\sigma\in\Sigma_k}(-1)^{\on{sign}(\sigma)}\tau_{\sigma(a_1)\dots\sigma(a_k)},$$ as another shorthand, where $\Sigma_k$ denotes the permutation group of $\{1,\dots,k\}$ and $\on{sign}:\Sigma_k\to \mbb{Z}_2$ is the unique non-trivial group morphism.
 
 Finally, we will also invoke Einstein's summation convention, summing over repeated upper and lower indices.
 \end{remark}
 
 Using Einstein's summation convention, the symplectic form can be written \begin{equation}\label{eq:omega}\omega:=dp_adx^a +d\eta_ad\xi^a,\end{equation}
 and the homological vector field is \begin{equation}\label{eq:Q}Q:=\xi^a\partial_{x^a}+p_a\partial_{\eta_a}=X_{\xi^a p_a}.\end{equation} The corresponding Poisson bracket is defined on coordinates by
 $$\{p_a,x^b\}=\delta_a^b,\quad \{\eta_a,\xi^b\}=\delta_a^b,\quad\text{ where }\delta_a^b:=\left\{\begin{array}{cl}
1&\text{ if }a=b\\
0&\text{ otherwise.}\end{array}\right.$$ (and the Poisson bracket of any other pair of coordinates is equal to zero).

\subsubsection{Exact Courant Algebroids}\label{sec:ExCAlg}

Exact Courant algebroids were first introduced in \cite{Severa:2001}. We recall the construction described in \cite{Roytenberg02,KosmannSchwarzbach:2005wc}. 
Let $\kappa\in\Omega^3_{cl}(M)$ be a closed 3-form.  Under the identification $C^\infty(T[1]M)\cong \Omega^\bullet(M)$, we may view $\kappa$ as a degree 3 function on $T[1]M$. 
 If $q:T^*[2]T[1]M\to T[1]M$ is the projection, then $q^*\kappa$ satisfies the Maurer-Cartan equation
$$Qq^*\kappa+\frac{1}{2}\{q^*\kappa,q^*\kappa\}=0$$ (with both terms vanishing identically).

Therefore, with $X_\kappa:=\{q^*\kappa,\cdot\}$, $$Q_\kappa:=Q+X_\kappa$$ defines a new homological vector field on $T^*[2]T[1]M$, preserving the symplectic structure. The triple $(T^*[2]T[1]M,Q_\kappa,\omega)$ is a Courant algebroid, called the {\em exact Courant algebroid with background 3-form $\kappa$.}

\subsubsection{Gauge Transformations}\label{sec:GTinf}
Suppose that $\beta\in\Omega^2(M)$ is a 2-form. The time-1 Hamiltonian flow along $X_\beta:=\{q^*\beta,\cdot\}$ preserves the symplectic structure, but deforms the homological vector field $Q_\kappa$ to 
$$e^{\on{ad}(X_\beta)}(Q_\kappa)= (1+[X_\beta,]+\frac{1}{2}[X_\beta,[X_\beta,\cdot]]+\cdots)(Q+X_\kappa)
=Q+X_{\kappa-d\beta}.$$
Consequently, the time-1 Hamiltonian flow $e^{X_\beta}$ defines an isomorphism of Courant algebroids \begin{equation}\label{eq:betatwist}e^{X_\beta}:(T^*[2]T[1]M,Q_\kappa,\omega) \to (T^*[2]T[1]M,Q_{\kappa-d\beta},\omega).\end{equation}

In particular, if $\beta$ is closed,  $e^{X_\beta}$ defines an automorphism of Courant algebroids.
Furthermore, \eqref{eq:betatwist} shows that the isomorphism class of $(T^*[2]T[1]M,Q_\kappa,\omega)$ depends only on the cohomology class of $\kappa$. Any Courant algebroid isomorphic to $(T^*[2]T[1]M,Q_\kappa,\omega)$ is called an {\em exact Courant algebroid with characteristic class $[\kappa]\in H^3(M)$}.


\begin{definition}\label{def:sym2grp}
A {\em strictly-2-symplectic local Lie 2-groupoid} over a manifold $M$ is a pair $([\hat X_\bullet],\omega_{\hat X})$, where $[{\hat X}_\bullet]$ is a local Lie 2-groupoid with $[{\hat X}_0]=M$ and $\omega_{\hat X}\in\Omega^2([{\hat X}_2])$ is a multiplicative symplectic form.


$([{\hat X}_\bullet],\omega_{\hat X})$ {\em integrates} the Courant algebroid $(X,\omega_X)$ if \begin{equation}\label{eq:diff}\on{1-Jet}([{\hat X}_\bullet])=X,\text{ and }\on{1-Jet}(\hat\omega_{\hat X})=\omega_X.\end{equation}
\end{definition}

\begin{remark}\label{rem:nonUniqueSymplForm}Suppose $([\hat X^1_\bullet],\omega_{\hat X^1})$ and $([\hat X^2_\bullet],\omega_{\hat X^2})$ are two integrations of a Courant algebroid $(X,\omega_X)$. 
Since $\on{1-Jet}$ is not faithful, we cannot expect a unique diffeomorphism $\phi:[\hat X^1_\bullet]\to [\hat X^2_\bullet]$ to exist such that the following diagram commutes
\begin{equation}\label{eq:DoesNotCom}\xymatrix{
[ T[1,0]\hat X^1_\bullet]\ar[r]^{d\phi}\ar[d]^{\hat\omega_{\hat X^1}}&[T[1,0]\hat X^2_\bullet]\ar[d]^{\hat\omega_{\hat X^1}}\\
K(\mbb{R}[2],2)\ar[r]&K(\mbb{R}[2],2)
}\end{equation}
That is, it is possible that no symplectomorphism $$\phi:[\hat X^1_\bullet]\to [\hat X^2_\bullet]$$ satisfying $\on{1-Jet}(\phi)=\on{id}$ exists; and if it does exist, it need not be unique.
In particular, the symplectic form $\omega_{\hat X}$ on $[\hat X_2]$ is not unique, in general. 

For example, in \S~\ref{sec:IntStdC}, we will construct a strictly-2-symplectic (local) 2-groupoid $$([\hat X^1_\bullet],\omega_{\hat X^1}):=([\mc{T}^*_\bullet M],\omega_\mc{T})$$ integrating $(X,\omega_X):=(T^*[2]T[1]M,\omega)$,  the standard Courant algebroid. Meanwhile, Mehta and Tang \cite{Mehta:2010ue} have constructed a Lie 2-groupoid $\hat X^{2}_\bullet$ together with a closed multiplicative 2-form $\omega_{\hat X^2}\in\Omega^2({\hat X^2}_\bullet)$ such that \eqref{eq:diff} also holds. Moreover, as we shall explain in Remark~\ref{rem:isom}, there is a canonical map $\phi:[\hat X^1_\bullet]\to [\hat X^2_\bullet]$ such that $\on{1-Jet}(\phi)=\on{id}$.

However, the 2-form, $\omega_{\hat X^2}$, they construct is degenerate, even along ${\hat X^2}_0\subset {\hat X^2}_2$. Hence, there exists no map $\phi:[\hat X^1_\bullet]\to [\hat X^2_\bullet]$ such that \eqref{eq:DoesNotCom} commutes. 

This suggests that a more flexible definition of (local) symplectic 2-groupoids is needed, which interprets diagrams such as \eqref{eq:DoesNotCom} in terms of appropriate 2-morphisms. We plan to address this issue in a future paper.
\end{remark}

The remainder of this paper will be devoted to constructing strictly-2-symplectic (local) 2-groupoids integrating standard and exact Courant algebroids.


\begin{remark}\label{rem:simGlbInt}
If we ignore the symplectic structure, there is a very simple construction of a simplicial manifold integrating the standard and exact Courant algebroids. However, integrating the symplectic form is slightly more challenging, and the bulk of the work in this note will be spent doing this.

There is a natural isomorphism of $NQ$-manifolds $T^*[2]T[1]M\cong T[1]T^*[1]M$ (see \S~\ref{sec:exCAlg}). In general, if $V\to M$, $V=V^{-1}\oplus\dots \oplus V^{-m}$, is a graded vector bundle concentrated in negative degrees then it is easy to describe an integration $I_\bullet$ of $T[1]V$ such that $I_\bullet$ is a Lie $m+1$-groupoid. In the special case $V=T^*[1]M$ the simplicial manifold $I_\bullet$ is (locally) isomorphic to the simplicial manifold $\mc{T}^*_\bullet M$ we will construct in \S~\ref{sec:IntStdC}.

The simplicial manifold $I_\bullet$ is defined as follows: a point in $I_k$ is a $k+1$-tuple $(x_0,\dots,x_k)$ of points of $M$ together with a choice of $v_\phi\in V^{-l}_{x_{\phi(0)}}$ for every monotone map $\phi:[l]\to[k]$ (for every $l$), such that $v_\phi=0$ for every non-injective $\phi$.

As explained by Mehta and Tang \cite{Mehta:2010ue}, this construction has the following more conceptual explanation: $V$ can be viewed as a bundle of non-negatively graded chain complexes (with trivial differential). Using the Dold-Kan correspondence pointwise (cf. Example~\ref{ex:DoldKan}), we get a bundle $\Gamma(V)_\bullet$ of simplicial vector spaces over $M$. Next we note that the bisimplicial manifold $E_\bullet\Gamma(V)_\bullet$ integrates the bi-$NQ$-manifold $T[1,0]V$. Finally, one obtains the simplical manifold $I_\bullet:=\overline{W}\big(E_\bullet\Gamma(V)_\bullet\big)$ by applying the \emph{bar construction} functor
$\overline{W}:\on{Man}^{{\mbf{\Delta}^2}^{\on{op}}}\to\on{Man}^{{\mbf{\Delta}}^{\on{op}}}$ described in \cite{Artin:1966we,CEGARRA:2005eo,Mehta:2010ue}. 

The fact that $I_\bullet:=\overline{W}(E_\bullet\Gamma(V)_\bullet)$ integrates $T[1]V$ follows directly from the fact that $\on{1-Jet}$ intertwines the functor $\overline{W}$ with the `total-grading' functor from bi-$NQ$-manifolds to $NQ$-manifolds induced by the diagonal inclusion $\on{\underline{End}}(\mbb{R}^{0\mid 1})\subset \on{\underline{End}}(\mbb{R}^{0\mid 1})^2$.
\end{remark}

\section{Integration of the Standard Courant Algebroid}\label{sec:IntStdC}
Let $M$ be a manifold. In this section we construct a local strictly-2-symplectic 2-groupoid integrating $T^*[2]T[1]M$, the standard Courant algebroid over $M$. To do this, we first choose a connection on $M$. Let $U\subset M\times M$ be a symmetric neighbourhood of the diagonal such that for any $(x,y)\in U$ there exists a unique geodesic $$\gamma_{x,y}:[0,1]\to M, \text{ such that }\gamma_{x,y}(0)=x,\gamma_{x,y}(1)=y,\text{ and }(x,\gamma_{x,y}(t))\in U.$$

 For $t\in[0,1]$ let $q_t:U\to M$ be given by $q_t(x,y)=\gamma_{x,y}(t)$.  We define $W:=q_{1/2}^*T^*M$. Notice that, using the parallel transport, we can identify $W$ with $W_t:=q_{t}^*T^*M$ for any $t\in[0,1]$.

Let $U_2\subset M^3$ be given by $U_2=\{(x,y,z)|(x,y),(x,z), (y,z)\in U\}$. We have a map $m:U_2\to M^3$ given by 
$$m:(x,y,z)\mapsto \bigl(q_{1/2}(x,y),q_{1/2}(y,z),q_{1/2}(z,x)\bigr).$$ We shall suppose that $U$ is such that $m$ is an injective local diffeomorphism.

Similarly, we let $U_n\subset M^{n+1}=E_n M$ be given by $U_n=\{(z_0,\dots, z_n)\mid (z_i, z_j)\in U,\quad \forall 0\leq i,j\leq n\}$. It is clear that $U_\bullet\subset E_\bullet M$ is a simplicial submanifold.

%

 \subsection{The 2-groupoid}\label{sec:2grpoid}
 We now construct a simplicial manifold $\mc{T}^*_\bullet M$   which integrates $T^*[2]T[1]M$.
 We define it by
 \begin{multline*}\mc{T}^*_n M:=\{\big((z_0,\dots,z_n);\{w_{i,j}\}_{0\leq i, j\leq n}\big)\\ \text{ such that } (z_i,z_j)\in U, \text{ and } w_{i,j}\in W_{(z_i,z_j)}\text{ for all } 0\leq i,j\leq n\text{ and }w_{i,j}=-w_{j,i}\text{ for all }i,j\}.\end{multline*}
 For any monotone map $f:[n]\to[m]$ we define $\mc{T}^*(f):\mc{T}^*_m M\to \mc{T}^*_n M$ by
 \begin{align*}\big((z_0,\dots,z_m);\{w_{i,j}\}_{0\leq i, j\leq m}\big)\mapsto &\big((z_{f(0)},\dots,z_{f(n)});\{w'_{i,j}\}_{0\leq i, j\leq n}\big)\\ &\text{ where } w'_{i,j}:=w_{f(i),f(j)}.\end{align*} 
In the sequel, when describing an element of $\mc{T}^*_n M$, we will often omit the terms $w_{ii}\equiv 0$, since they are always $0$.
 We can visualize $\mc{T}^*_\bullet M$ graphically as in Figure~\ref{fig:T*2gE}. It is clear that $[\mc{T}^*_\bullet M]$ is a local 2-groupoid since simplices are fully determined by their 0 and 1 dimensional faces.
\begin{figure}
\def\svgwidth{\columnwidth}
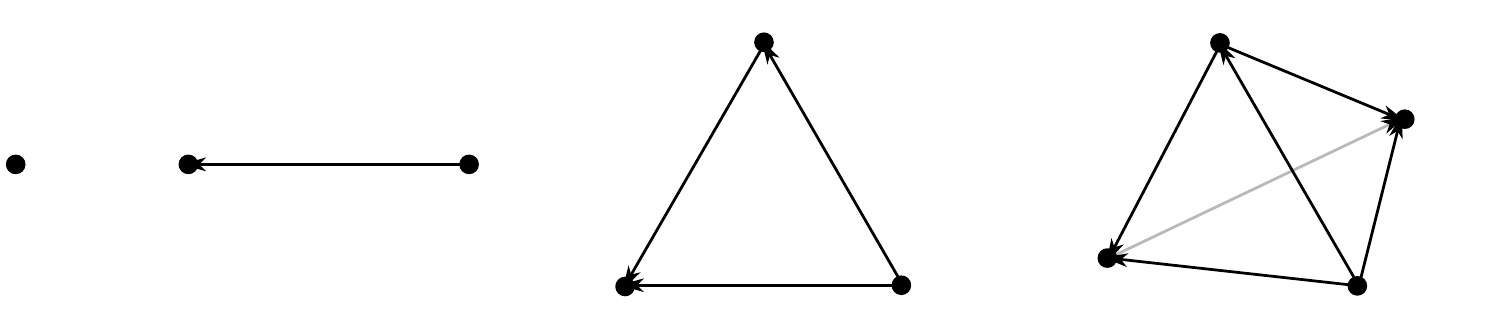
\caption{\label{fig:T*2gE} The local 2-groupoid $[\mc{T}^*_\bullet M]$. 
(Here $z_i\in M$ and $w_{i,j}\in W_{(z_i,z_j)}$.)}
\end{figure}

Recall the simplicial manifold $E_\bullet M$ defined in Example~\ref{ex:EM}.
We remark that $\mc{T}^*_\bullet M$ is a simplicial vector bundle over $U_\bullet$, an open  simplicial submanifold of $E_\bullet M$, with the projection \begin{equation}\label{eq:qmct}q_{\mc{T}}:\big((z_0,\dots,z_n);\{w_{i,j}\}_{0\leq i, j\leq n}\big)\to (z_0,\dots,z_n).\end{equation}

\begin{remark}\label{rem:isom}
If one replaces the vector bundle $W:=W_{\frac{1}{2}}$ with $ T^*M\times M\supseteq W_0$, the construction of $\mc{T}^*_\bullet M$ described above is formally identical to the construction of the simplicial manifold $I_\bullet$ described in Remark~\ref{rem:simGlbInt}.
Since parallel transport defines an isomorphism $W\cong W_0$, we get a canonical embedding of simplicial manifolds $\mc{T}^*_\bullet M\subseteq I_\bullet$. Note that $I_\bullet$ is precisely the Lie 2-groupoid described by Mehta and Tang \cite{Mehta:2010ue}.
\end{remark}
\begin{remark}
Recall that one can integrate $T^*[2]T[1]M$ to an infinite dimensional simplicial manifold $X_\bullet$ \cite{Severa:2005vla}, whose $n$-simplices are $$X_n=\on{SMan}_{NQ}\big(T[1]\lvert\Delta^n\rvert,\;T^*[2]T[1]M\big),$$ where $\lvert\Delta^n\rvert=\{(x_0,\dots,x_n)\in\mbb{R}^{n+1}\mid \sum_ix_i=1\text{ and }x_i\geq0\}$ is the geometric $n$-simplex. Using the canonical isomorphism $\omega^\flat_{T^*[1]M}\circ L: T^*[2]T[1]M\to T[1]T^*[1]M$ described in \eqref{eq:Lom}, together with Lemma~\ref{lem:Qbij}, one sees that $$X_n\cong\on{SMan}_N\big(T[1]\lvert\Delta^n\rvert,\;T^*[1]M\big)\cong \on{VBund}\big(T\lvert\Delta^n\rvert,\;T^*M\big),$$ where $\on{VBund}$ is the category of vector bundles. 

We may view our simplicial manifold $\mc{T}^*_\bullet M$ as a simplicial submanifold of \linebreak[4]
 $\on{VBund}(T\lvert\Delta^\bullet\rvert,T^*M)$. Indeed, $$\mc{T}^*_0 M=\on{VBund}(T\lvert\Delta^0\rvert,T^*M)=M.$$ 
 Meanwhile,  we can realize elements of $\mc{T}^*_n M$ ($n>0$) inductively as harmonic maps\footnote{Note that since the fibres of vector bundles are totally geodesic and linear maps are totally geodesic, a vector bundle morphism $f:T\lvert\Delta^n\rvert\to T^*M$ is harmonic if and only if its restriction $f\circ\sigma:\lvert\Delta^n\rvert\to T^*M$ to any flat section $\sigma:\lvert\Delta^n\rvert\to T\lvert\Delta^n\rvert$ is harmonic.} in $\on{VBund}(T\lvert\Delta^n\rvert,T^*M)$ which satisfy the following boundary conditions
 \begin{itemize}
 \item they restrict to elements of $\mc{T}^*_{n-1} M\subset \on{VBund}(T\lvert\Delta^{n-1}\rvert,T^*M)$ along the boundary $T\partial\lvert \Delta^n\rvert\subset T\lvert\Delta^n\rvert$, and
 \item they map horizontal vectors at points in $\big(T\partial \lvert\Delta^n\rvert\big)^\perp\subset T\lvert\Delta^n\rvert$ to horizontal vectors (horizontal with respect to the Levi-Cevita connections).
 \end{itemize}
 
 
 
 In the construction of $\mc{T}^*_\bullet M$, we only considered `harmonic' simplicies in $X_\bullet$ which are suitably close to being degenerate. However, 
 given any element of $\on{VBund}\big(T\lvert\Delta^n\rvert,\;T^*M\big)$, one could first deform it to a homotopic `energy minimizing' map, and then look at nearby `harmonic' simplices. Patching these neighbourhoods together, one might hope to construct a global integration of $T^*[2]T[1]M$.

\end{remark}

\subsection{The 2-form}
Next we construct a multiplicative symplectic form on $\mc{T}^*_2 M$ which integrates the symplectic structure on $T^*[2]T[1]M$.

The map $m:U_2\to M^3$ lifts to a local diffeomorphism $\tilde m:\mc{T}^*_2 M\to T^*M^3$,
$$\tilde m: \bigl( (x_0,x_1,x_2); (w_{01}, w_{02},w_{12}) \bigr) \mapsto \bigl( m(x_0,x_1,x_2);(w_{01}, w_{12},w_{20})\bigr).  $$
 The symplectic form $\omega_\mc{T}$ on $\mc{T}^*_2 M$ is defined to be the pullback of the symplectic form on $T^*M^3$.

To check that $\omega_\mc{T}$ is multiplicative, i.e.\ that $D\omega=0$, it suffices to show that $\omega_\mc{T}=D\alpha$ for some $\alpha\in\Omega^2(\mc{T}^*_1 M)$. Let $\tilde q_{1/2}:\mc{T}^*_1 M\to T^*M$ denote the map defined by $(x,y;w)\mapsto (q_{1/2}(x,y),w)$.
We can use $\alpha=\tilde q_{1/2}^*\omega_{T^*M}$, where $\omega_{T^*M}$ is the symplectic form on $T^*M$.

\subsection{Proof that $([\mc{T}^*_\bullet M],\omega_{\mc{T}})$ integrates $T^*[2]T[1]M$.}
\begin{theorem}
The strictly-2-symplectic local 2-groupoid $([\mc{T}^*_\bullet M],\omega_{\mc{T}})$ integrates the standard Courant algebroid $(T^*[2]T[1]M,Q,\omega)$.
\end{theorem}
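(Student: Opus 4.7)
Two conditions must be verified per Definition~\ref{def:sym2grp}: that $\on{1-Jet}([\mc{T}^*_\bullet M]) \cong T^*[2]T[1]M$ as $NQ$-manifolds, and that under this identification $\on{1-Jet}(\hat\omega_\mc{T}) = \omega$. The plan is to handle these in order.

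For the first, I would appeal to the framework of Remark~\ref{rem:simGlbInt}. Via the projection $q_\mc{T}$ of \eqref{eq:qmct}, $\mc{T}^*_\bullet M$ is a simplicial vector bundle over $U_\bullet \subset E_\bullet M$, and parallel transport identifies its fibers with those of $T^*M \times M$, giving the canonical local embedding $\mc{T}^*_\bullet M \hookrightarrow I_\bullet$ noted in Remark~\ref{rem:isom}. Remark~\ref{rem:simGlbInt} guarantees that $I_\bullet$ integrates $T[1]V = T[1]T^*[1]M$ for $V = T^*[1]M$, and the natural isomorphism of $NQ$-manifolds $T^*[2]T[1]M \cong T[1]T^*[1]M$ (mentioned in Remark~\ref{rem:simGlbInt}) closes the identification. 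As a cross-check one may parameterize directly in coordinates: an $X$-point of $\on{1-Jet}([\mc{T}^*_\bullet M])$ is a simplicial map $[X \times E_\bullet \mbb{R}^{0\mid 1}] \to [\mc{T}^*_\bullet M]$; composition with $q_\mc{T}$ returns an $X$-point of $T[1]M$ (as in Example~\ref{ex:EMintTM}), while antisymmetry $w_{ij} = -w_{ji}$ and simpliciality force the $w_{ij}$ data to expand as $w_{ij}(\theta_i, \theta_j) = (\theta_j - \theta_i)\eta + \theta_i\theta_j\, p$, yielding a degree-1 coordinate $\eta_a$ and a degree-2 coordinate $p_a$ that complete the set $(x^a, \xi^a, \eta_a, p_a)$ on $T^*[2]T[1]M$.

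For the second part, I would compute $\on{1-Jet}(\hat\omega_\mc{T})$ in local coordinates using $\omega_\mc{T} = \tilde m^* \omega_{T^*M^3}$. A cleaner route than attacking $\omega_\mc{T}$ directly is to exploit the primitive $\alpha = \tilde q_{1/2}^* \omega_{T^*M}$ on $\mc{T}^*_1 M$ noted at the end of \S\ref{sec:2grpoid}: compute $\on{1-Jet}(\hat\alpha)$ in the parameterization above, observing that $\tilde q_{1/2}$ restricts to the identity on the diagonal so connection-dependent corrections enter only at orders beyond the reach of the degree-2 coefficient of the 1-jet, and then recover $\on{1-Jet}(\hat\omega_\mc{T})$ by exterior differentiation. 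The leading-order contribution should be precisely $dp_a\, dx^a + d\eta_a\, d\xi^a = \omega$, as desired. The $Q$-invariance and closure of the resulting 2-form are automatic from the functoriality of $\on{1-Jet}$ applied to a closed, simplicial $\hat\omega_\mc{T}$. The main obstacle is the bookkeeping of connection-dependent corrections in this computation; the key input making these drop out is that $q_{1/2}$ restricts to the identity on the diagonal $M \subset U$ and its first variation there is the intrinsic averaging map $(\xi, \xi') \mapsto \tfrac12(\xi + \xi')$, so any dependence on the chosen connection is confined to strictly higher-order terms than the degree-2 piece seen by the 1-jet.
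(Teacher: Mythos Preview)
Your first part is essentially the paper's own argument, just packaged with an extra appeal to Remark~\ref{rem:simGlbInt}. The direct coordinate parametrization you sketch---forcing $w_{ij}=(\theta_j-\theta_i)\eta+\theta_i\theta_j\,p$ from simpliciality and antisymmetry---is exactly what the paper carries out. One omission: in your cross-check you identify the underlying supermanifold but never verify the $Q$-structure. The paper does this by computing how the $\mbb{R}^{0\mid 1}$-translation $\theta\cdot(\theta_0,\dots,\theta_n)=(\theta_0+\theta,\dots,\theta_n+\theta)$ descends to $(x^\alpha,\xi^\alpha,p_\alpha,\eta_\alpha)\mapsto(x^\alpha+\theta\xi^\alpha,\xi^\alpha,p_\alpha,\eta_\alpha+\theta p_\alpha)$, which is the flow of the vector field \eqref{eq:Q}. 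Your appeal to Remark~\ref{rem:simGlbInt} covers this in principle, but that remark is itself only a sketch, so you should either trust it entirely or complete the coordinate check.

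Your second part contains a genuine confusion. The primitive $\alpha=\tilde q_{1/2}^*\omega_{T^*M}\in\Omega^2(\mc{T}^*_1 M)$ is a $D$-primitive, not a $d$-primitive: one has $\omega_\mc{T}=D\alpha$ for the simplicial differential \eqref{eq:simpDiff}, while $d\alpha=0$ since $\alpha$ is the pullback of a symplectic form. So you cannot ``recover $\on{1-Jet}(\hat\omega_\mc{T})$ by exterior differentiation'' from anything built out of $\alpha$. Moreover, since $D\alpha=\omega_\mc{T}\neq 0$, the form $\alpha$ is \emph{not} multiplicative in the sense of Definition~\ref{def:MultForm1}, and hence the simplicial extension $\hat\alpha$ you invoke does not exist. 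A corrected version of your idea does work: compute $\on{ev}_1^*\alpha$ on $T^*[2]T[1]M\times E_1\mbb{R}^{0\mid 1}$, then use $\on{ev}_2^*\omega_\mc{T}=\on{ev}_2^*D\alpha=\sum_i(-1)^i(d_i)^*\on{ev}_1^*\alpha$ with the simplicial face maps on $E_\bullet\mbb{R}^{0\mid 1}$. This is computationally equivalent to what the paper does directly, namely pulling back $\omega_\mc{T}$ along $\on{ev}_2$ and extracting the $(2,0)$-bihomogeneous part, which comes out to $(dp_\alpha\,dx^\alpha+d\eta_\alpha\,d\xi^\alpha)(\theta_0\theta_1+\theta_1\theta_2+\theta_2\theta_0)$; your observation that $q_{1/2}$ is the identity on the diagonal with first variation $\tfrac12(\xi+\xi')$ is exactly the input the paper uses in writing $q_{1/2}(x^\alpha+\theta_i\xi^\alpha,x^\alpha+\theta_j\xi^\alpha)=x^\alpha+\tfrac{\theta_i+\theta_j}{2}\xi^\alpha$.
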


\begin{proof}
First we calculate $\on{1-Jet}([\mc{T}^*_\bullet M])$. Since \eqref{eq:qmct} defines a simplicial map $q_{\mc{T}}:\mc{T}^*_\bullet M\to E_\bullet M$, it induces a map
$$q_{\mc{T}}:\on{SMan}^{{\mbf{\Delta}}^{\on{op}}}(X\times E_\bullet\mbb{R}^{0\mid 1},\mc{T}^*_\bullet M)\to \on{SMan}^{{\mbf{\Delta}}^{\on{op}}}(X\times E_\bullet\mbb{R}^{0\mid 1},E_\bullet M),$$ which is natural in $X$, and therefore a map $$\tilde q_{\mc{T}}:\on{1-Jet}([\mc{T}^*_\bullet M])\to \on{1-Jet}([E_\bullet M])=T[1]M,$$
where the identity $\on{1-Jet}([E_\bullet M])=T[1]M$ was described in Example~\ref{ex:EMintTM}.

For $v\in T[1]_xM$, the corresponding map $\on{ev}_v:E_1\mbb{R}^{0\mid 1}\to E_1M$ is given by $$\on{ev}_v:(\theta_0,\theta_1)\to(x+\theta_0v,x+\theta_1v).$$ We would like to look at all possible maps $E_\bullet\mbb{R}^{0\mid 1}\to \mc{T}^*_\bullet M$ lying over the map $\on{ev}_v$.

Now a map $F_v:E_1\mbb{R}^{0\mid 1}\to \mc{T}^*_1 M$ such that $\tilde q_{\mc{T}}(F_v)=\on{ev}_v$ must be of the form
\begin{equation}\label{eq:Fv}F_v:(\theta_0,\theta_1)\to\big((x+\theta_0v,x+\theta_1v);(w_{0,1}=a\theta_0+b\theta_1+c\theta_0\theta_1)\big),\end{equation}
for $a,b\in W_{x+\theta_0v,x+\theta_1v}[1]\cong T^*[1]_xM$ and $c\in W_{x+\theta_0v,x+\theta_1v}[2]\cong T^*[2]_xM$. 

If we would like to extend $F_v:E_1\mbb{R}^{0\mid 1}\to \mc{T}^*_1 M$ to a map of simplicial (super)-manifolds, the extension must be equivariant with respect to the action of the category ${\mbf{\Delta}}^{\on{op}}$.
In particular, we must have
\begin{equation}\label{eq:obsToExt}F_v\circ s_0=s_0\circ \on{ev}_v\Rightarrow a=-b.\end{equation}
However \eqref{eq:obsToExt} is the only obstruction to extending $F_v$. A routine check shows that $$(\theta_0,\dots,\theta_n)\to \big((x+\theta_0v,\dots,x+\theta_nv);(w_{ij}=b(\theta_j-\theta_i)+c\theta_i\theta_j)\big)$$
 is the unique extension of \eqref{eq:Fv} to a map $E_\bullet\mbb{R}^{0\mid 1}\to \mc{T}^*_\bullet M$.

Consequently simplicial maps $E_\bullet \mbb{R}^{0\mid 1}\to \mc{T}^*_\bullet M$ are parametrized by the super-manifold $T^*[2]T[1]M$. In the coordinates described in \S~\ref{sec:CAlg}, the maps are 
$$\on{ev}_{(x^\alpha,\xi^\alpha,p_\alpha,\eta_\alpha)}:(\theta_0,\dots,\theta_n)\to \big((x^\alpha+\theta_0\xi^\alpha,\dots,x^\alpha+\theta_n\xi^\alpha);(w_{ij}=\eta_\alpha(\theta_j-\theta_i)+p_\alpha\theta_i\theta_j)\big),$$ see Figure~\ref{fig:Maps}. It follows that $\on{1-Jet}([\mc{T}^*_\bullet M])=T^*[2]T[1]M$ as a super-manifold.
\begin{figure}
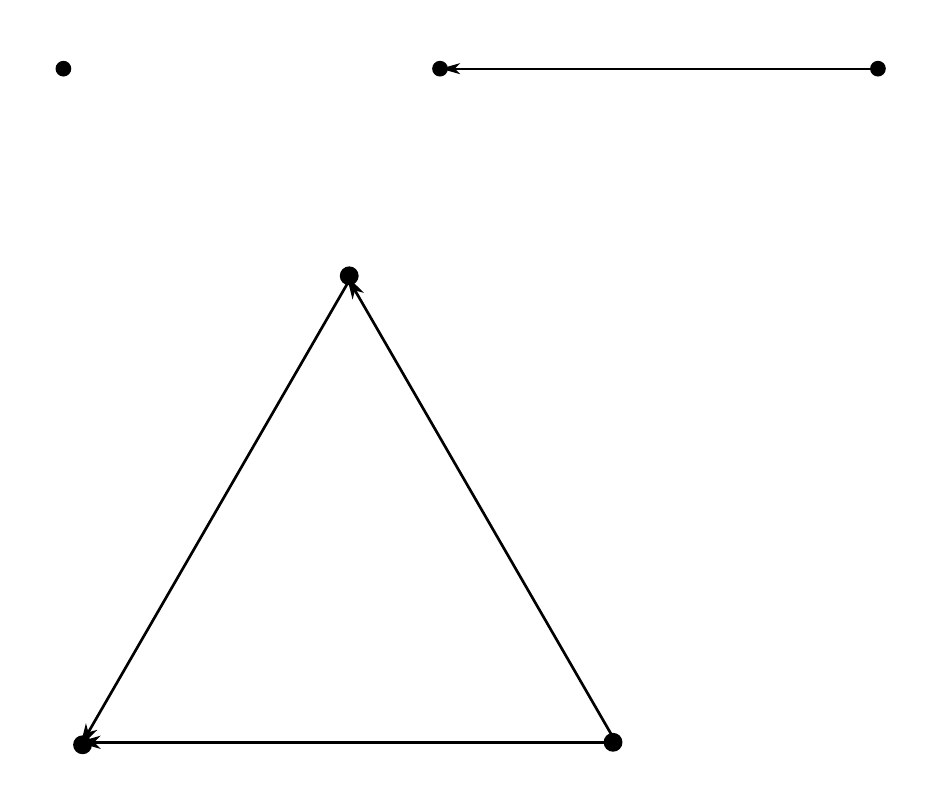
\caption{\label{fig:Maps} The Maps $E_\bullet\mbb{R}^{0\mid 1}\to \mc{T}^*_\bullet M$ parametrized by $T^*[2]T[1]M$.}
\end{figure}

To show that $\on{1-Jet}([\mc{T}^*_\bullet M])$ is the $NQ$-manifold $T^*[2]T[1]M$, we must show that the $Q$ structure on $\on{1-Jet}([\mc{T}^*_\bullet M])$ is given by \eqref{eq:Q}.
The natural action of $\mbb{R}^{0\mid 1}$ on $E_\bullet \mbb{R}^{0\mid 1}$ is $$\theta\cdot(\theta_0,\dots,\theta_n)=(\theta_0+\theta,\dots,\theta_n+\theta).$$ So we see that 
\begin{multline*}\on{ev}_{(x^\alpha,\xi^\alpha,p_\alpha,\eta_\alpha)}:\theta\cdot(\theta_0,\dots,\theta_n)\to \big((x^\alpha+\theta\xi^\alpha+\theta_0\xi^\alpha,\dots,x^\alpha+\theta\xi^\alpha+\theta_n\xi^\alpha);\\(w_{ij}=(\eta_\alpha+\theta p_\alpha)(\theta_j-\theta_i)+p_\alpha\theta_i\theta_j)\big),\end{multline*} which descends to the action \begin{equation}\label{eq:Qflow}\theta\cdot(x^\alpha,\xi^\alpha,p_\alpha,\eta_\alpha)=(x^\alpha+\theta\xi^\alpha,\xi^\alpha,p_\alpha,\eta_\alpha+\theta p_\alpha)\end{equation} on $\on{1-Jet}([\mc{T}^*_\bullet M])=T^*[2]T[1]M$. Since \eqref{eq:Qflow} is just the flow along the vector field \eqref{eq:Q}, it follows that $\on{1-Jet}([\mc{T}^*_\bullet M])=T^*[2]T[1]M$ as an $NQ$-manifold. 

What remains is to check that $\omega_{\mc{T}}\in\Omega^2(\mc{T}^*_2 M)$ integrates the canonical symplectic form on the cotangent bundle $T^*[2]T[1]M$. For this purpose we need to find the pullback of $\omega_\mc{T}$ by the evaluation map $\on{ev}_2:T^*[2]T[1]M\times E_2\mbb{R}^{0\mid 1}\to\mc{T}^*_2 M$. We have 
$$q_{1/2}(x^\alpha + \theta_i\xi^\alpha, x^\alpha + \theta_j\xi^\alpha)=x^\alpha + \frac{\theta_i+\theta_j}{2}\xi^\alpha.$$
As a result, 
$$\on{ev}_2^*\omega_\mc{T}=\sum_{(i,j)=(0,1),(1,2),(2,0)}d\bigl(x^\alpha + \frac{\theta_i+\theta_j}{2} \xi^\alpha\bigr)\, d \bigl(\eta_\alpha(\theta_j-\theta_i) + p_\alpha\theta_i\theta_j\bigr).$$
The $(2,0)$-bihomogeneous part of $\on{ev}_2^*\omega_\mc{T}$ (the part which is a 2-form on $T^*[2]T[1]M$ para\-metrized by $E_2\mbb{R}^{0\mid 1}$) is equal to
$$(\on{ev}_2^*\omega_\mc{T})^{(2,0)}=(dp_\alpha\,dx^\alpha + d\eta_\alpha\,d\xi^\alpha)(\theta_0\theta_1+\theta_1\theta_2+\theta_2\theta_0).$$
Comparison with \eqref{eq:VEKVmap} shows that $\on{1-Jet}(\hat\omega_\mc{T})=d p_\alpha dx^\alpha+d\eta_\alpha d\xi^\alpha$. Since the symplectic form on $T^*[2]T[1]M$ is given by \eqref{eq:omega}, this concludes the proof.

\end{proof}

\begin{remark}
In \cite{Mehta:2010ue}, Rajan Mehta and Xiang Tang construct a (global) Lie 2-groupoid by applying the bar construction to the bi-simplicial manifold integrating the Lie bialgebroid $(T^*M,TM)$. As explained in Remark~\ref{rem:isom}, $[\mc{T}^*_\bullet M]$ is isomorphic to the local version of the Lie 2-groupoid they build.

Furthermore, they construct a closed (degenerate) two-form $\omega'_\mc{T}$ on the 2-simplices of $\mc{T}^*_2 M$.
A calculation similar to the proof above shows that $\on{1-Jet}(\omega'_\mc{T})=\omega_{T^*[2]T[1]M}$, therefore, their construction differentiates to the standard Courant algebroid.


The requirement in Definition~\ref{def:sym2grp} that the 2-form be non-degenerate ``on the nose'',  is (at least at first glance) less complicated than the corresponding non-degeneracy requirements in \cite[\S~6.2]{Mehta:2010ue}. However, it would be interesting to know if Mehta and Tang's requirements on their 2-form are equivalent to the $\on{1-Jet}$ of their 2-form being non-degenerate ``on the nose''. Additionally, our 2-form is only defined on a local Lie 2-groupoid, while theirs is defined on a genuine Lie 2-groupoid.

Notice that, unlike the construction in \cite{Mehta:2010ue}, $\mc{T}^*_\bullet M:\mbf{\Delta}^{\on{op}}\to \on{Man}$ extends naturally to a functor $\text{FSets}^\text{op}\to \on{Man}$ (where $\text{FSets}$ is the category of finite sets). 
\end{remark}

\section{Integration of Exact Courant Algebroids}\label{sec:exCAlg}
In this section, we show that as $NQ$-manifolds, any exact Courant algebroid over $M$ is isomorphic to the standard Courant algebroid over $M$ (see also \cite{Sheng:2011vz}). Therefore {\em once we modify the symplectic structure}, the construction in \S~\ref{sec:IntStdC} also integrates exact Courant algebroids.

\begin{lemma}\label{lem:Qbij}
Let $F:\on{SMan}_{NQ}\to\on{SMan}_N$ be the forgetful functor. Then $T[1]:\on{SMan}_N\to \on{SMan}_{NQ}$ is a right adjoint for $F$. In particular, for $X\in \on{SMan}_{NQ}$ and $Z\in\on{SMan}_N$, $$\on{SMan}_{NQ}(X,T[1]Z)\cong\on{SMan}_N(X,Z).$$
\end{lemma}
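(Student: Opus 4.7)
The plan is to exhibit the adjunction bijection explicitly. Recall that $T[1]Z$ is the super-manifold representing $S \mapsto \on{SMan}(S \times \mbb{R}^{0|1}, Z)$, so Yoneda identifies $\on{SMan}(X, T[1]Z)$ with $\on{SMan}(X \times \mbb{R}^{0|1}, Z)$. The $\on{\underline{End}}(\mbb{R}^{0|1})$-action making $T[1]Z$ an $NQ$-manifold combines the semigroup's natural action on the source $\mbb{R}^{0|1}$ with the $\mbb{R}$-action $\lambda^Z$ on the target $Z$: on $S$-points, $\phi = (s, \xi)$ sends $g$ to $\lambda_s^Z \circ g \circ (\on{id}_S \times \psi_\phi)$, where $\psi_\phi(\theta) = \xi + s\theta$. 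Let $\mu^X_\phi : X \to X$ denote the action of $\phi$ on $X$.

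In the backward direction, given $\tilde f \in \on{SMan}_{NQ}(X, T[1]Z)$, let $\bar f : X \times \mbb{R}^{0|1} \to Z$ be its Yoneda partner and set $f := \bar f \circ i_0$, where $i_0 : X \hookrightarrow X \times \mbb{R}^{0|1}$ is the inclusion at $\theta = 0$. Restricting the equivariance of $\tilde f$ along $\phi = (s, 0)$ and evaluating at $\theta = 0$ yields $f \circ \mu^X_{(s, 0)} = \lambda_s^Z \circ f$, so $f$ is an $N$-morphism. In the forward direction, given $f \in \on{SMan}_N(FX, Z)$, define $\tilde f$ to correspond under Yoneda to
$$\bar f : X \times \mbb{R}^{0|1} \to Z, \qquad \bar f(x, \theta) := f\bigl(\mu^X_{(1, \theta)}(x)\bigr).$$

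The central step is verifying that this $\tilde f$ is $\on{\underline{End}}(\mbb{R}^{0|1})$-equivariant. Translated through Yoneda, equivariance for $\phi = (s, \xi)$ reads $\bar f(\mu^X_\phi(x), \theta) = \lambda_s^Z(\bar f(x, \xi + s\theta))$. Substituting the formula for $\bar f$, the left-hand side becomes $f(\mu^X_{(1, \theta)} \circ \mu^X_\phi(x))$, which by the semigroup composition law equals $f(\mu^X_{(s, \xi + s\theta)}(x))$. The right-hand side, using the $N$-equivariance $\lambda_s^Z \circ f = f \circ \mu^X_{(s, 0)}$, becomes $f(\mu^X_{(s, 0)} \circ \mu^X_{(1, \xi + s\theta)}(x))$, which likewise reduces to $f(\mu^X_{(s, \xi + s\theta)}(x))$. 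The two sides agree.

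Mutual inverseness is immediate: restricting $\bar f(x, \theta) = f(\mu^X_{(1, \theta)}(x))$ at $\theta = 0$ recovers $f$ since $(1, 0)$ is the identity of $\on{\underline{End}}(\mbb{R}^{0|1})$, while the $(1, \theta_0)$-equivariance of any equivariant $\tilde f$ gives $\bar f(\mu^X_{(1, \theta_0)}(x), 0) = \bar f(x, \theta_0)$, so reconstructing from $\bar f \circ i_0$ returns $\bar f$. Naturality in $X$ and $Z$ follows directly from the formulas. The main obstacle is bookkeeping — the precise form of the induced $\on{\underline{End}}(\mbb{R}^{0|1})$-action on $T[1]Z$ and the orientation of its composition with the action on $X$ — but once these are fixed, the verification is a single application of the semigroup product $(s_1, \xi_1)(s_2, \xi_2) = (s_1 s_2, \xi_1 + s_1 \xi_2)$.
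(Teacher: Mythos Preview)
Your proof is correct and follows essentially the same approach as the paper: both use the representability of $T[1]Z$ to identify $\on{SMan}(X,T[1]Z)$ with $\on{SMan}(X\times\mbb{R}^{0|1},Z)$ and then exploit the $\mbb{R}^{0|1}$-action on $X$ to pass between equivariant and plain maps. The paper packages this as the single isomorphism $X\times_{\mbb{R}^{0|1}}\mbb{R}^{0|1}\cong X$ and then records the unit $Q_X:X\to T[1]X$ and counit $q_Z:T[1]Z\to Z$; your explicit forward map $\bar f(x,\theta)=f(\mu^X_{(1,\theta)}(x))$ and backward map $f=\bar f\circ i_0$ are exactly the Yoneda translations of $T[1]f\circ Q_X$ and $q_Z\circ\tilde f$, respectively.
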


\begin{proof}
Since $T[1]Z$ represents the pre-sheaf $\on{SMan}(\cdot\times \mbb{R}^{0\mid 1},Z)$, we have $$\on{SMan}_{NQ}(X,T[1]Z)=\on{SMan}_N(X\times_{\mbb{R}^{0\mid 1}}\mbb{R}^{0\mid 1},Z),$$ but the action of $\mbb{R}^{0\mid 1}$ on $X$ defines an isomorphism $X\times_{\mbb{R}^{0\mid 1}}\mbb{R}^{0\mid 1}\cong X$.

It is clear that this bijection $\on{SMan}_{NQ}(X,T[1]Z)\cong\on{SMan}_N(X,Z)$ is natural in both $X$ and $Z$, which concludes the proof.

Let us describe the unit and co-unit of the adjunction explicitly. Note that the homological vector field $Q_X$ on $X$ defines a section $Q_X:X\to T[1]X$. Furthermore, $Q_X:X\to T[1]X$ is a $NQ$-morphism with respect to the de Rham vector field on $T[1]X$. The unit of the adjunction is the natural transformation $$1\to T[1]\circ F,\quad X\to (Q_X:X\to T[1]X).$$ 

Meanwhile, if $q_{Z}:T[1]Z\to Z$ is the canonical projection, then the co-unit is the natural transformation $$F\circ T[1]\to 1,\quad Z\to (q_Z:T[1]Z\to Z).$$

In particular, if $f:X\to Z$ is any $N$-morphism, $$T[1]f\circ Q_X:X\to T[1]Z$$ is the corresponding $NQ$-morphism.
In the other direction, if $\tilde f:X\to T[1]Z$ is an $NQ$-morphism, the corresponding $N$-morphism is just $q_Z\circ\tilde f$.
\end{proof}

\begin{example}[The Anchor Map]
Suppose $X$ is any $NQ$-manifold over the base $B(X):=(0,0)\cdot X$ (here $(0,0)\in\on{\underline{End}}(\mbb{R}^{0\mid 1})$, see Definition~\ref{def:NQmfld}). Multiplication by $(0,0)$ defines the canonical projection $q_X:X\to B(X)$. It follows from Lemma~\ref{lem:Qbij} that there is a unique $NQ$-morphism $$a_X:X\to T[1]B(X)$$ over $q_X:X\to B(X)$, called the {\em anchor map} for $X$. 

Furthermore, Lemma~\ref{lem:Qbij}  implies that $a_X$ is a natural transformation from the identity functor on $\on{SMan}_{NQ}$ to $T[1]\circ B:\on{SMan}_{NQ}\to\on{SMan}_{NQ}$. In fact $T[1]:\on{Man}\to\on{SMan}_{NQ}$ is a right adjoint for $B:\on{SMan}_{NQ}\to \on{Man}$, with unit $a_X$ and co-unit the identity.
\end{example}

Let \begin{subequations}\label{eq:Lom}\begin{equation}L:T^*[2]T[1]M\to T^*[2]T^*[1]M\end{equation} denote the Legendre transform and \begin{equation}\omega_{T^*[1]M}^\flat:T^*[2]T^*[1]M\to T[1]T^*[1]M\end{equation}\end{subequations} be the diffeomorphism defined by the canonical symplectic structure $\omega_{T^*[1]M}$ on $T^*[1]M$ \cite{Mackenzie-Xu94,Roytenberg:2002}. Note that the composition $\omega_{T^*[1]M}^\flat\circ L$ is an $NQ$-morphism with respect to the de Rham vector field on $T[1]T^*[1]M$ \cite{Roytenberg:2002}. Finally, let $s:T[1]T^*[1]M\to T^*[1]M$ denote the canonical projection.

Suppose that $(T^*[2]T[1]M,Q_\kappa,\omega)$ is a exact Courant algebroid with background 3-form $\kappa\in\Omega^3(M)$. Then by Lemma~\ref{lem:Qbij}, there is a $NQ$-morphism $\psi_\kappa:(T^*[2]T[1]M,Q_\kappa)\to (T^*[2]T[1]M,Q)$ corresponding to the $N$-morphism $s\circ\omega_{T^*[1]M}^\flat\circ L$.

\begin{proposition}\label{prop:psiK}
The $NQ$-morphism $\psi_\kappa:(T^*[2]T[1]M,Q_\kappa)\to (T^*[2]T[1]M,Q)$ is given by translation along the fibres of $q:T^*[2]T[1]M\to T[1]M$ by the degree-2 1-form $$\frac{1}{2}\iota_{Q_{dR}}\iota_{Q_{dR}}q_M^*\kappa,$$ where $q_M:T[1]M\to M$ is the projection, and $Q_{dR}$ is the de Rham vector field on $T[1]M$.
In particular, $\psi_\kappa$ is a diffeomorphism.

\end{proposition}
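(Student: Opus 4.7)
The plan is to exhibit the translation described in the statement as the (unique) $NQ$-morphism corresponding, under the adjunction of Lemma~\ref{lem:Qbij}, to the $N$-morphism $s\circ\omega_{T^*[1]M}^\flat\circ L:T^*[2]T[1]M\to T^*[1]M$. Via the canonical $NQ$-isomorphism $\omega_{T^*[1]M}^\flat\circ L:(T^*[2]T[1]M,Q)\xrightarrow{\sim}(T[1]T^*[1]M,Q_{dR})$ (which rewrites the target as $T[1]Z$ with $Z=T^*[1]M$), Lemma~\ref{lem:Qbij} asserts uniqueness of such a morphism. So it suffices to check (i) that the candidate translation is an $NQ$-morphism $(T^*[2]T[1]M,Q_\kappa)\to(T^*[2]T[1]M,Q)$, and (ii) that its post-composition with $s\circ\omega_{T^*[1]M}^\flat\circ L$ reproduces $s\circ\omega_{T^*[1]M}^\flat\circ L$.

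Writing $\kappa=\tfrac16\kappa_{abc}(x)\,\xi^a\xi^b\xi^c$ in the coordinates of \S\ref{sec:stdCAlg}, a direct computation shows that $\sigma_\kappa:=\tfrac12\iota_{Q_{dR}}\iota_{Q_{dR}}q_M^*\kappa$ is the 1-form on $T[1]M$ with coefficient $\tfrac12\kappa_{abc}\xi^a\xi^b$ in the $dx^c$-direction and vanishing $d\xi^c$-coefficient. Viewed as a section of $q:T^*[2]T[1]M\to T[1]M$, this corresponds to $(p_c,\eta_c)=(\tfrac12\kappa_{abc}\xi^a\xi^b,\,0)$, so the translation is
\[
\psi_\kappa:(x^a,\xi^a,p_a,\eta_a)\mapsto\bigl(x^a,\,\xi^a,\,p_a+\tfrac12\kappa_{abc}\xi^b\xi^c,\,\eta_a\bigr),
\]
which is a diffeomorphism with inverse given by translation by $-\sigma_\kappa$. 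Property (ii) is then transparent: in coordinates, $\omega_{T^*[1]M}^\flat\circ L$ sends $\xi^a\mapsto dx^a$ and $p_a\mapsto d\eta_a$ inside $T[1]T^*[1]M$, so $s\circ\omega_{T^*[1]M}^\flat\circ L$ is simply the projection $(x^a,\xi^a,p_a,\eta_a)\mapsto(x^a,\eta_a)$, which $\psi_\kappa$ fixes.

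For (i), I would verify $\psi_\kappa^*\circ Q=Q_\kappa\circ\psi_\kappa^*$ on each generator. The identities on $x^a$ and $\xi^a$ hold trivially; on $\eta_a$ the identity reduces to $X_\kappa(\eta_a)=\tfrac12\kappa_{abc}\xi^b\xi^c$, which is immediate from $X_\kappa=\{q^*\kappa,\cdot\}$ together with $\{\eta_a,\xi^b\}=\delta_a^b$; and on $p_a$ it reduces, using $X_\kappa(p_a)=-\partial_{x^a}(q^*\kappa)$ and $Q(p_a)=0$, to the algebraic identity
\[
\tfrac12(\partial_{x^d}\kappa_{abc})\xi^d\xi^b\xi^c=\tfrac16(\partial_{x^a}\kappa_{bcd})\xi^b\xi^c\xi^d.
\]
This last equation is precisely what one gets by contracting the closedness condition $\partial_{x^a}\kappa_{bcd}-\partial_{x^b}\kappa_{acd}+\partial_{x^c}\kappa_{abd}-\partial_{x^d}\kappa_{abc}=0$ against $\xi^b\xi^c\xi^d$ and using total antisymmetry of $\xi^b\xi^c\xi^d$ to collapse the three nonzero terms into three copies of the left-hand side. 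This Bianchi-style step is the main obstacle — not deep, but it is the only place where the hypothesis $d\kappa=0$ is used, and it requires careful bookkeeping of the graded signs to pin down the factor of three correctly.
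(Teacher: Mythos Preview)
Your proof is correct. Both your argument and the paper's reduce to a coordinate check, and your translation formula $p_a\mapsto p_a+\tfrac12\kappa_{abc}\xi^b\xi^c$ (with $\kappa=\tfrac16\kappa_{abc}\xi^a\xi^b\xi^c$) agrees with the paper's $p_\alpha\mapsto p_\alpha+3\kappa_{[\alpha bc]}\xi^b\xi^c$ once one matches conventions for the components of $\kappa$.

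The organization differs slightly. The paper simply computes both maps in coordinates---the translation, and $\psi_\kappa$ via the explicit adjunction formula $(\omega^\flat\circ L)^{-1}\circ T[1](s\circ\omega^\flat\circ L)\circ Q_\kappa$ from Lemma~\ref{lem:Qbij}---and observes they coincide; the closedness of $\kappa$ is only used implicitly, through $Q_\kappa^2=0$. You instead invoke the uniqueness part of the adjunction and verify that the candidate translation satisfies the two characterizing properties. This is a small rearrangement rather than a genuinely different route: what your approach buys is that the role of $d\kappa=0$ becomes explicit (it is exactly what makes the $p_a$-check close), while the paper's direct computation avoids having to re-derive the $NQ$-compatibility by hand.
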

\begin{proof}
A routine calculation shows that, in coordinates, both maps are equal to $$\psi_\kappa:(x^\alpha,\xi^\alpha,p_\alpha,\eta_\alpha)\to(x^\alpha,\xi^\alpha,p_\alpha+3\kappa_{[\alpha bc]}\xi^b\xi^c,\eta_\alpha),$$
where 
$\kappa=\kappa_{[abc]}dx^adx^bdx^c$. 

\end{proof}

It follows that, under the diffeomorphism $\psi_\kappa$, the canonical symplectic form $\omega$ on $T^*[2]T[1]M$ transforms as $$(\psi_\kappa^{-1})^*\omega=\omega-\frac{1}{2}q^*d\iota_{Q_{dR}}\iota_{Q_{dR}}q_M^*\kappa=\omega-q^*\Lied_{Q_{dR}}\iota_{Q_{dR}}q_M^*\kappa.$$

Therefore, we need to modify the symplectic form on $\mc{T}^*_2 M$ by a multiplicative term which is mapped to $-\Lied_{Q_{dR}}\iota_{Q_{dR}}q_M^*\kappa$ by $\on{1-Jet}$.

\subsection{Modification of the symplectic form on $\mc{T}^*_2 M$}
To lift this modification of the symplectic form to $\mc{T}^*_2 M$, we make use of the connection on $M$ to integrate $\kappa$ along geodesics in $M$.

Recall from \S~\ref{sec:2grpoid} that $\mc{T}^*_\bullet M$ is a simplicial vector bundle over the  simplicial manifold $U_\bullet\subseteq E_\bullet M$. We define a 2-form $\mu\in\Omega^2(U_1)$ 
\begin{equation}\label{eq:mu}\mu:=\fint_{[0,1]}\Gamma^*\kappa\end{equation}
by integration over the fibres of \begin{equation}\label{eq:Gamma}\Gamma:[0,1]\times U_1 \to M,\quad \Gamma(t,x,y)=\gamma_{x,y}(t).\end{equation}
Let $\varkappa\in\Omega^2(U_2)$ be defined by $$\varkappa=D\mu:=\sum_{i=0}^2 (-1)^i d_i^*\mu.$$ We see immediately that $D\varkappa=0$, so it is multiplicative. Furthermore, $$d\fint_{[0,1]}\Gamma^*\kappa-\fint_{[0,1]}d\Gamma^*\kappa=d_1^*\kappa-d_0^*\kappa.$$ Since $\kappa$ is closed, this implies that $d\mu=d_1^*\kappa-d_0^*\kappa$. Consequently $\varkappa$ is closed.

\begin{lemma}\label{lem:1jvk=lqqk}
$\on{1-Jet}(\hat\varkappa)=\Lied_{Q_{dR}}\iota_{Q_{dR}}q_M^*\kappa$
\end{lemma}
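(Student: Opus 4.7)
Since $\varkappa = D\mu = d_0^*\mu - d_1^*\mu + d_2^*\mu$ and $\on{1-Jet}([U_\bullet]) = T[1]M$ (inherited from $[E_\bullet M]$, cf.\ Example~\ref{ex:EMintTM}), I plan to compute $\on{1-Jet}(\hat\varkappa)$ by pulling $\varkappa$ back along the evaluation morphism $\on{ev}_2\colon T[1]M\times E_2\mbb{R}^{0\mid 1}\to U_2$ and extracting the $(2,0)$-bihomogeneous part. By the Dold--Kan formula \eqref{eq:VEKVmap}, the coefficient of $\theta_0\theta_1+\theta_1\theta_2+\theta_2\theta_0$ in this part is precisely the degree-$2$ two-form $\on{1-Jet}(\hat\varkappa)$ on $T[1]M$, just as in the proof of the previous theorem. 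So the problem reduces to computing $\on{ev}_1^*\mu$ and then forming the alternating sum after relabelling the odd parameters through the three face maps $d_i$.

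\textbf{Key Input.} The super-geodesic between two ``infinitesimally close'' super-points linearizes exactly. Setting $u := (1-t)\theta_i + t\theta_j$, the Grassmann identity $(\theta_j-\theta_i)^2 = -\theta_i\theta_j-\theta_j\theta_i = 0$ forces $u^2 = 0$, so every curvature correction in the Taylor expansion of the geodesic equation about $t=0$ vanishes identically, yielding
\[
\gamma_{x+\theta_i\xi,\,x+\theta_j\xi}(t) \;=\; x + u\xi.
\]
For the same reason $\kappa_{[abc]}(x+u\xi) = \kappa_{[abc]}(x) + u\xi^e\partial_e\kappa_{[abc]}(x)$ (higher terms vanish), so that $\Gamma^*\kappa$ collapses to an explicit polynomial in the $\theta$'s, the $dx$'s, the $d\xi$'s, the $dt$'s and the $d\theta_i$'s, whose pieces can be organized by bidegree.

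\textbf{Execution and Comparison.} The fiber integration $\fint_{[0,1]}$ picks out the $dt$-component of $\Gamma^*\kappa$---this $dt$ comes exclusively from $du = (\theta_j-\theta_i)\,dt + (1-t)\,d\theta_i + t\,d\theta_j$---and the integrals $\int_0^1 u\,dt = \tfrac{\theta_i+\theta_j}{2}$, $\int_0^1(\theta_j-\theta_i)\,dt = \theta_j-\theta_i$ then give an explicit closed-form expression for $\on{ev}_1^*\mu$ with coefficients polynomial in $\theta_i,\theta_j$. Summing the three face contributions with signs, the terms linear in the $\theta$'s telescope to zero, and the remaining quadratic part assembles into precisely $\theta_0\theta_1+\theta_1\theta_2+\theta_2\theta_0$ multiplied by a concrete 2-form $\alpha\in\Omega^2(T[1]M)$ of degree $2$ built from $\kappa_{abc}$ and its first derivatives. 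A direct local-coordinate computation---using $d\kappa = 0$, the Cartan identity $\Lied_{Q_{dR}} = d\iota_{Q_{dR}} + \iota_{Q_{dR}}d$, and the super-commutativity $\Lied_{Q_{dR}}\iota_{Q_{dR}} = \iota_{Q_{dR}}\Lied_{Q_{dR}}$ (which follows from $Q_{dR}^2=0$)---then identifies $\alpha$ with $\Lied_{Q_{dR}}\iota_{Q_{dR}}q_M^*\kappa$, proving the lemma. The principal bookkeeping difficulty is the super-sign accounting in the triple product $(dx^a + \xi^a\,du - u\,d\xi^a)^{\wedge 3}$ and in correctly separating the $dt$-contribution of $du$ (which drives the fiber integration) from its $d\theta_i$-contributions (which must be discarded in the $(2,0)$-bihomogeneous part).
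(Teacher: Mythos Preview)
Your proposal is correct and follows essentially the same route as the paper's proof: both pull $\mu$ back along the evaluation map using the linearized geodesic $\gamma_{x+\theta_i\xi,\,x+\theta_j\xi}(t)=x+\bigl((1-t)\theta_i+t\theta_j\bigr)\xi$, integrate out $dt$, form the simplicial alternating sum, and read off the coefficient of $\theta_0\theta_1+\theta_1\theta_2+\theta_2\theta_0$ before matching it with $\Lied_{Q_{dR}}\iota_{Q_{dR}}q_M^*\kappa$ via $d\kappa=0$. Your explicit justification that curvature corrections vanish because $u^2=0$ is a nice touch that the paper leaves implicit.
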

The two form $\omega_{\mc{T}}-q_\mc{T}^*\varkappa$ is non-degenerate since we are modifying the standard symplectic form on  of $T^*M^3$ by a closed 2-form on (an open subset of) $M^3$. Thus, we immediately get the following theorem.
\begin{theorem}
The strictly-2-symplectic local 2-groupoid $([\mc{T}^*_\bullet M],\omega_{\mc{T}}-q_\mc{T}^*\varkappa)$ integrates the exact Courant algebroid $(T^*[2]T[1]M,Q_\kappa,\omega)$ with background 3-form $\kappa\in\Omega^3(M)$.
\end{theorem}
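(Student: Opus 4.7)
The plan is to leverage the previous theorem together with the $NQ$-isomorphism $\psi_\kappa$ of Proposition~\ref{prop:psiK}. Via $\psi_\kappa$ the exact Courant algebroid $(T^*[2]T[1]M, Q_\kappa, \omega)$ is isomorphic to $(T^*[2]T[1]M, Q, (\psi_\kappa^{-1})^*\omega)$, and the paragraph preceding this theorem already computes $(\psi_\kappa^{-1})^*\omega = \omega - q^*\Lied_{Q_{dR}}\iota_{Q_{dR}}q_M^*\kappa$. By the previous theorem, $\on{1-Jet}([\mc{T}^*_\bullet M]) \cong (T^*[2]T[1]M, Q)$ as $NQ$-manifolds, with $\on{1-Jet}(\hat\omega_\mc{T}) = \omega$. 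The theorem therefore reduces to exhibiting a multiplicative symplectic form on $\mc{T}^*_2 M$ whose 1-jet under this identification equals $\omega - q^*\Lied_{Q_{dR}}\iota_{Q_{dR}}q_M^*\kappa$; the natural candidate is $\omega_\mc{T} - q_\mc{T}^*\varkappa$.

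First I would verify the analytic properties of $\omega_\mc{T} - q_\mc{T}^*\varkappa$. Multiplicativity: $\omega_\mc{T}$ is multiplicative by the previous theorem; for $\varkappa = D\mu$ one has $D\varkappa = D^2\mu = 0$ automatically, while $s_i^*\varkappa = 0$ follows from the simplicial identities together with $s_0^*\mu = 0$, the latter because the degeneracy $s_0: M \to U_1$ precomposes $\Gamma$ with the constant geodesic, so the pulled-back integrand of \eqref{eq:mu} has no $dt$-component and its fibre integral vanishes. Closedness: $\omega_\mc{T}$ is symplectic, and $d\varkappa = Dd\mu = D(d_1^*\kappa - d_0^*\kappa) = 0$, the last equality by direct inspection of the pullbacks along the $d_i: U_2 \to U_1 \to U_0 = M$. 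Non-degeneracy is essentially noted just before the theorem: $q_\mc{T}^*\varkappa$ is pulled back via $\tilde m: \mc{T}^*_2 M \to T^*M^3$ from a 2-form on the base $U_2 \subseteq M^3$, and adding such a base 2-form to the canonical cotangent symplectic form on $T^*M^3$ preserves non-degeneracy.

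Finally, I would compute the 1-jet of the candidate. By the previous theorem $\on{1-Jet}(\hat\omega_\mc{T}) = \omega$. Functoriality of $\on{1-Jet}$ and its compatibility with pullbacks of multiplicative forms give $\on{1-Jet}(\widehat{q_\mc{T}^*\varkappa}) = \on{1-Jet}(q_\mc{T})^* \on{1-Jet}(\hat\varkappa)$. The morphism $\on{1-Jet}(q_\mc{T})$ is identified with the canonical projection $q: T^*[2]T[1]M \to T[1]M$ by composing $q_\mc{T}$ with the parametrization $\on{ev}_{(x^\alpha,\xi^\alpha,p_\alpha,\eta_\alpha)}$ from the proof of the previous theorem, and $\on{1-Jet}(\hat\varkappa) = \Lied_{Q_{dR}}\iota_{Q_{dR}}q_M^*\kappa$ is exactly Lemma~\ref{lem:1jvk=lqqk}. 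Combining yields $\on{1-Jet}(\widehat{\omega_\mc{T} - q_\mc{T}^*\varkappa}) = \omega - q^*\Lied_{Q_{dR}}\iota_{Q_{dR}}q_M^*\kappa$, completing the proof. The real content is concentrated in Proposition~\ref{prop:psiK} (identifying exact and standard $NQ$-structures) and Lemma~\ref{lem:1jvk=lqqk} (computing the 1-jet of the geodesic average $\varkappa$); once those are in hand, the present theorem is essentially assembly.
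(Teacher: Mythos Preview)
Your proposal is correct and follows exactly the paper's approach: the paper treats the theorem as an immediate consequence of Proposition~\ref{prop:psiK}, the computation of $(\psi_\kappa^{-1})^*\omega$, Lemma~\ref{lem:1jvk=lqqk}, and the remark on non-degeneracy, and your write-up simply spells out this assembly in more detail (including the $s_i^*\varkappa=0$ check, which the paper leaves implicit).
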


\begin{proof}[proof of Lemma~\ref{lem:1jvk=lqqk}]
We do the calculation in coordinates. 
Recall from Example~\ref{ex:EMintTM} that $T[1]M$ parametrizes the simplicial maps $E_\bullet\mbb{R}^{0\mid 1}\to E_\bullet M$. A point $(x^\alpha,\xi^\alpha)\in T[1]M$ parametrizes the map $$\on{ev}_{(x^\alpha,\xi^\alpha)}:(\theta_0,\dots,\theta_n)\to (x^\alpha+\theta_0\xi^\alpha,\dots,x^\alpha+\theta_n\xi^\alpha).$$
Composition with $\Gamma:[0,1]\times E_1M\to M$ is given by
$$\Gamma:\big(t,\on{ev}_{(x^\alpha,\xi^\alpha)}(\theta_0,\theta_1)\big)\to
(x^\alpha+(t(\theta_1-\theta_0)+\theta_0)\xi^\alpha).$$

Therefore, with $\kappa=\kappa_{[abc]}dx^adx^bdx^c$,
$$\begin{array}{rl}
&\Gamma^*\kappa_{[abc]}dx^adx^bdx^c\\
=&\kappa_{[abc]}(x^\alpha+(\cdots)\xi^\alpha)d(x^a+(\cdots)\xi^a)d(x^b+(\cdots)\xi^b)d(x^c+(\cdots)\xi^c)\\
=&\big(\kappa_{[abc]}(x^\alpha)+\frac{\partial\kappa_{[abc]}}{\partial x^d}(x^\alpha)(t(\theta_1-\theta_0)+\theta_0)\xi^d+\cdots)\\
&\big( 3(\theta_1-\theta_0)\xi^a dt dx^b dx^c+6\theta_0\theta_1\xi^a dt d\xi^b dx^c+\cdots\big)\\
=&\big(\kappa_{[abc]}(3(\theta_1-\theta_0)\xi^a dx^b dx^c-6\theta_0\theta_1\xi^a d\xi^b dx^c)-3\frac{\partial\kappa_{[abc]}}{\partial x^d}\theta_0\theta_1\xi^d\xi^a dx^b dx^c\big)dt+\cdots
\end{array}$$
Where we have omitted the terms that do not contain $dt$.
So at the point $\on{ev}_{(x^\alpha,\xi^\alpha)}(\theta_0,\theta_1)$, $\mu$ is given by 
$$\mu(\on{ev}_{(x^\alpha,\xi^\alpha)}(\theta_0,\theta_1))=\kappa_{[abc]}(3(\theta_1-\theta_0)\xi^a dx^b dx^c-6\theta_0\theta_1\xi^a d\xi^b dx^c)-3\frac{\partial\kappa_{[abc]}}{\partial x^d}\theta_0\theta_1\xi^d\xi^a dx^b dx^c$$

Therefore, at the point $\on{ev}_{(x^\alpha,\xi^\alpha)}(\theta_0,\theta_1,\theta_2)$, the 2-form $\varkappa$ is given by
$$\varkappa(\on{ev}_{(x^\alpha,\xi^\alpha)}(\theta_0,\theta_1,\theta_2))=-3\big(2\kappa_{[abc]}\xi^a d\xi^b dx^c+\frac{\partial\kappa_{[abc]}}{\partial x^d}\xi^d\xi^a dx^b dx^c\big)(\theta_0\theta_1+\theta_1\theta_2+\theta_2\theta_0).$$

Comparison with \eqref{eq:VEKVmap} shows that $$\on{1-Jet}(\hat\varkappa)=-6\kappa_{[abc]}\xi^a d\xi^b dx^c-3\frac{\partial\kappa_{[abc]}}{\partial x^d}\xi^d\xi^a dx^b dx^c.$$
Using $d\kappa=0$, we calculate,
\begin{align*}
\on{1-Jet}(\hat\varkappa)
&=6\kappa_{[abc]}d\xi^a \xi^b dx^c+3\frac{\partial\kappa_{[abc]}}{\partial x^d}dx^d \xi^a \xi^b dx^c\\
&=3d(\kappa_{[abc]}\xi^a\xi^bdx^c)\\
&=\frac{1}{2}d\iota_{Q_{dR}}\iota_{Q_{dR}}q_M^*\kappa\\
&=\Lied_{Q_{dR}}\iota_{Q_{dR}}q_M^*\kappa
\end{align*}
\end{proof}

\bibliography{basicbib7}{}
\bibliographystyle{amsplain-url}
\end{document}